\theoremstyle{plain}
\newtheorem{theorem}{Theorem}[section]
\newtheorem{untheorem}{Theorem}
\newcommand{\R}{\ensuremath{\mathbb{R_+}}}
\newtheorem{corollary}{Corollary}[theorem]
\newtheorem{lemma}[theorem]{Lemma}
\newtheorem{proposition}[theorem]{Proposition}
\newtheoremstyle{remark}
    {\dimexpr\topsep/2\relax} % space above
    {\dimexpr\topsep/2\relax} % space below
    {}          % body font
    {}          % indent amount
    {\bfseries} % theorem head font
    {.}         % punctuation after theorem head
    {.5em}      % space after theorem head
    {}          % theorem hed spec. (empty = "normal")
\theoremstyle{remark}
\newtheorem{remark}{\emph{\textbf{Remark}}}[section]
\newtheoremstyle{example}
    {\dimexpr\topsep/2\relax} % space above
    {\dimexpr\topsep/2\relax} % space below
    {}          % body font
    {}          % indent amount
    {\bfseries} % theorem head font
    {.}         % punctuation after theorem head
    {.5em}      % space after theorem head
    {}          % theorem hed spec. (empty = "normal")
\theoremstyle{example}
\newtheorem{example}{\emph{\textbf{Example}}}[section]
\newtheoremstyle{definition}
    {\dimexpr\topsep/2\relax} % space above
    {\dimexpr\topsep/2\relax} % space below
    {}          % body font
    {}          % indent amount
    {\bfseries} % theorem head font
    {.}         % punctuation after theorem head
    {.5em}      % space after theorem head
    {}          % theorem hed spec. (empty = "normal")
\theoremstyle{definition}
\newtheorem*{similartheorem*}{Theorem \dualnumber{$'$}}
\numberwithin{equation}{section}
\begin{document}
\title{Dilation-commuting operators on power-weighted Orlicz classes}
%{\footnote{\noindent 2010 {\it Mathematics Subject Classification} primary 42B25, 26D15 secondary 28A25}}
\keywords{Dilation-commuting operators, Orlicz spaces, Norm inequalities, Modular inequalities, Hardy operator, Maximal function, Hilbert transform}
{\let\thefootnote\relax\footnote{\noindent 2010 {\it Mathematics Subject Classification.} primary 42B25, 26D15 secondary 28A25}}
%{\let\thefootnote\relax\footnote{The results in this paper are from the author's thesis ``The Carath\'{e}odory-Fej\'{e}r Interpolation Problems and the von Neumann Inequality" submitted to the Indian Institute of Science, Bangalore-560012.}}

%\thanks{The first named author is supported by Science and Engineering Research Board, DST, Government of India.}
%\thanks{The named author is supported by Council for Scientific and Industrial Research, MHRD, Government of India.}
%
%\author{Rajeev Gupta}
\author{Ron Kerman}
\author{Rama Rawat}
\author{Rajesh K. Singh}
%
%\address{Rajeev Gupta: Department of Mathematics and Statistics, Indian Institute of Technology, Kanpur-208016}
%\email{rajeevg@math.iitk.ac.in}

\address{Ron Kerman: Department of Mathematics, Brock University, St. Catharines, Ontario, L2S 3A1, Canada}
\email{rkerman@brocku.ca}
\address{Rama Rawat: Department of Mathematics and Statistics, Indian Institute of Technology, Kanpur-208016}
\email{rrawat@iitk.ac.in}
\address{Rajesh K. Singh: Department of Mathematics and Statistics, Indian Institute of Technology, Kanpur-208016}
\email{rajeshks@iitk.ac.in}
%\author{ Nandini Nilakantan\footnote{Department of Mathematics and Statistics, IIT Kanpur, Kanpur-208016, India. nandini@iitk.ac.in.},  Samir Shukla\footnote{{Department of Mathematics and Statistics, IIT Kanpur, Kanpur-208016, India. samirs@iitk.ac.in.}}}

\pagestyle{headings}

\begin{abstract}Let $\Phi_1$ and $\Phi_2$ be nondecreasing functions from $\mathbb{R_+}=(0,\infty)$ onto itself. For $i=1,2$ and $\gamma \in \mathbb{R}$, define the Orlicz class $L_{\Phi_{i}}(\mathbb{R_+})$ to be the set of Lebesgue-measurable functions $f$ on $\mathbb{R_+}$ such that
\begin{equation*}
\int_{\mathbb{R_+}} \Phi_{i} \left( k|(Tf)(t)| \right) t^{\gamma}dt < \infty
\end{equation*} 
for some $k>0$.

Our goal in this paper is to find conditions on $\Phi_1$, $\Phi_2$, $\gamma$ and an operator $T$ so that the assertions
\begin{equation}
T : L_{\Phi_2,t^{\gamma}}(\mathbb{R_+}) \rightarrow L_{\Phi_1,t^{\gamma}}(\mathbb{R_+}), \tag{I}
\end{equation}
and
\begin{equation}\label{modularA}
\int_{\mathbb{R_+}} \Phi_1 \left( |(Tf)(t)| \right)t^{\gamma}dt \leq K \int_{\mathbb{R_+}} \Phi_2 \left( K|f(s)| \right)s^{\gamma}ds, \tag{M}
\end{equation}
in which $K>0$ is independent of $f$, say, simple on $\mathbb{R_+}$, are equivalent and to then find necessary and sufficient conditions in order that (\ref{modularA}) holds. 
\end{abstract}

\maketitle

\section{Introduction}
Let the operator $T$ map  the set, $S(\mathbb{R_+})$, of simple, Lebesgue-measurable functions on $\mathbb{R_+}=(0,\infty)$ into $M(\mathbb{R_+})$, the class of Lebesgue-measurable functions on $\mathbb{R_+}$. Suppose that $T$ is positively homogeneous in the sense that
\begin{equation*}
| T(cf) | = |c| |Tf|, \ f \in S(\mathbb{R_+}), \ c \in \mathbb{R},
\end{equation*}
with, moreover, 
\begin{equation*}
(Tf)(\lambda t ) = T \left( f(\lambda \ \cdot) \right)(t), \  \lambda,t \in \mathbb{R_+}.
\end{equation*}
We call such a $T$ a \emph{dilation-commuting operator}.

Our aim in this paper is to determine when certain dilation-commuting operators map functions in a so-called Orlicz class, $L_{\Phi_2,t^{\gamma}}(\mathbb{R_+})$, into another such Orlicz class, $L_{\Phi_1,t^{\gamma}}(\mathbb{R_+})$. Here, the $\Phi_i, i=1,2$, are nonnegative, nondecreasing functions on $\mathbb{R_+}$, $\gamma \in \mathbb{R}$ and, for any given nonnegative, nondecreasing function $\Phi$ from $\mathbb{R_+}$ onto itself,
\begin{equation*}
L_{\Phi,t^{\gamma}}(\mathbb{R_+}) = \left\lbrace  f \in M(\mathbb{R_+}) : \int_{\mathbb{R_+}} \Phi(k|f(t)|)t^{\gamma}dt < \infty, \  \text{for some} \  k \in \mathbb{R_+} \right\rbrace.
\end{equation*}
One way to measure the size of an $f \in L_{\Phi}(\mathbb{R_+},t^{\gamma})$ is by its gauge
\begin{equation*}
\rho_{\Phi, t^{\gamma}}(f) = \inf \left\lbrace  \lambda>0 : \int_\mathbb{R_+} \Phi \left( \frac{|f(t)|}{\lambda} \right)  \frac{t^{\gamma}}{\lambda}dt \leq 1 \right\rbrace.
\end{equation*}

The fundamental result in this paper, the one on which all others are based, is
\begin{untheorem}\label{gauge norm equivalent modular}\label{main theorem}
Let $T$ be a dilation-commuting operator from $S(\mathbb{R_+})$ to $M(\mathbb{R_+})$. Suppose $\Phi_1$ and $\Phi_2$ are nonnegative, nondecreasing functions from $\mathbb{R_+}$ onto itself and fix $\gamma \in \mathbb{R}, \gamma \neq -1$. Then, there exists $C>0$, independent of $f \in S(\mathbb{R_+})$, such that
\begin{equation}\label{Gauge}
\rho_{\Phi_1,t^{\gamma}}(Tf) \leq C \rho_{\Phi_2,t^{\gamma}}(f) \tag{G}
\end{equation}
if and only if
\begin{equation}\label{Modular}
\int_{\mathbb{R_+}} \Phi_1 \left( |(Tf)(t)| \right)t^{\gamma}dt \leq K \int_{\mathbb{R_+}} \Phi_2 \left( K|f(s)| \right)s^{\gamma}ds, \tag{M}
\end{equation}
in which $K>0$ is independent of $f \in S(\mathbb{R_+})$ \footnote{One easily works out the variant of this in which $\mathbb{R_+}$ is replaced by $\mathbb{R}=(-\infty,\infty)$ and $t^{\gamma}$ by $|t|^{\gamma}$.}.
\end{untheorem}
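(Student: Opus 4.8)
The plan is to reduce both (G) and (M) to statements about the two one–variable functions
\[
P(a)=\int_{\mathbb{R_+}}\Phi_1\bigl(a|(Tf)(t)|\bigr)t^{\gamma}\,dt,\qquad Q(a)=\int_{\mathbb{R_+}}\Phi_2\bigl(a|f(t)|\bigr)t^{\gamma}\,dt,\quad a>0,
\]
attached to a fixed simple $f$. The basic mechanism is the interaction of dilation–commuting with the weight: writing $D_{\lambda}f=f(\lambda\,\cdot)$, the substitution $u=\lambda t$ gives $\int\Phi(|g(\lambda t)|)t^{\gamma}dt=\lambda^{-\gamma-1}\int\Phi(|g(u)|)u^{\gamma}du$, while $|T(D_{\lambda}f)|=|(Tf)(\lambda\,\cdot)|$ by dilation–commuting and $|T(cf)|=|c|\,|Tf|$ by homogeneity. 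Hence replacing $f$ by $D_{\lambda}f$ multiplies \emph{both} $P$ and $Q$ by the single factor $\lambda^{-\gamma-1}$, and because $\gamma\neq-1$ this factor sweeps out all of $\mathbb{R_+}$ as $\lambda$ varies. Since each $\Phi_i$ is a nondecreasing surjection of $\mathbb{R_+}$ onto itself, it is continuous with $\Phi_i(0^{+})=0$ and $\Phi_i(\infty)=\infty$, so $P,Q$ are continuous, nondecreasing, and (for $f\not\equiv0$) run from $0$ to $\infty$.

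For the easy implication (M)$\Rightarrow$(G), fix $\lambda>\rho_{\Phi_2,t^{\gamma}}(f)$, so that $\int\Phi_2(|f|/\lambda)t^{\gamma}\lambda^{-1}dt\le1$, i.e.\ $Q(1/\lambda)\le\lambda$. Applying (M) to $f/(K\lambda)$ and pulling the constant through $T$ by homogeneity yields $\int\Phi_1\bigl(|Tf|/(K\lambda)\bigr)t^{\gamma}dt\le K\,Q(1/\lambda)\le K\lambda$, which is exactly the statement $\int\Phi_1(|Tf|/(K\lambda))t^{\gamma}(K\lambda)^{-1}dt\le1$. Thus $\rho_{\Phi_1,t^{\gamma}}(Tf)\le K\lambda$, and letting $\lambda\downarrow\rho_{\Phi_2,t^{\gamma}}(f)$ gives (G) with $C=K$. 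This is precisely where the $\lambda^{-1}$ built into the gauge earns its keep.

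The substantive direction is (G)$\Rightarrow$(M). The critical level of the gauge is governed by $\rho_{\Phi_2,t^{\gamma}}(f)=\beta$ with $\beta=Q(1/\beta)$ and $\rho_{\Phi_1,t^{\gamma}}(Tf)=\alpha$ with $\alpha=P(1/\alpha)$ (the defining integrals are monotone in the scaling parameter, so the infimum is attained at the crossing). Applying (G) not to $f$ alone but to every dilate $D_{\lambda}f$, and using that dilation scales both $P$ and $Q$ by the common factor $\kappa=\lambda^{-\gamma-1}$ ranging over all of $\mathbb{R_+}$, the inequality $\alpha\le C\beta$ becomes, after the substitutions $x=1/\alpha$, $y=1/\beta$, $\kappa=(xP(x))^{-1}=(yQ(y))^{-1}$, the purely real–variable condition
\[
xP(x)=yQ(y)\ \Longrightarrow\ y\le Cx,\qquad x,y>0.
\]
Given $x>0$ with $P(x)<\infty$, continuity of $y\mapsto yQ(y)$ produces $y^{*}$ with $y^{*}Q(y^{*})=xP(x)$; then $y^{*}\le Cx$, and monotonicity of $Q$ gives $xP(x)=y^{*}Q(y^{*})\le Cx\,Q(Cx)$, that is, $P(x)\le C\,Q(Cx)$. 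Specializing to $x=1$ is (M) with $K=C$.

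The main obstacle is the correct bookkeeping of this hard direction. Because $T$ is only homogeneous and dilation–commuting (\emph{not} linear or sublinear), one cannot superpose copies of $f$ on disjoint scales and split $T$ across them; the argument must run through a single function and exploit that the one free parameter, the dilation, slides the critical level of the gauge across all of $\mathbb{R_+}$ — which is exactly the step where $\gamma\neq-1$ is indispensable, since for $\gamma=-1$ the modular is dilation–invariant and the sweep collapses. Some care is also needed with degenerate cases: if the right–hand side of (M) is infinite there is nothing to prove, while finiteness of $P$ near $0$ is forced by $\rho_{\Phi_1,t^{\gamma}}(Tf)<\infty$, so the inequality $P(x)\le C\,Q(Cx)$ is first established on the range where it is nonvacuous and then specialized to $x=1$.
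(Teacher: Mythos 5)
Your proof is correct in its essential mechanism and, at that level, coincides with the paper's: the hard direction in both cases consists of applying (G) to every dilate $D_{\lambda}f$ and exploiting that dilation scales \emph{both} modulars by the common factor $\lambda^{-\gamma-1}$, which sweeps out all of $\mathbb{R_+}$ exactly because $\gamma\neq-1$ (your parameter $\kappa=\lambda^{-\gamma-1}$ is precisely the paper's $\epsilon$, via the substitution $z=\epsilon^{\delta}t$, $\delta=\frac{1}{1+\gamma}$). Where you genuinely diverge is in how (M) is extracted from the swept family of gauge inequalities. The paper routes this through \Cref{Weighted modular equivalent to family of gauge}: given $f$, it simply \emph{chooses} $\epsilon=\left(\int_{\mathbb{R_+}}\Phi_2(|f|)s^{\gamma}ds\right)^{-1}$, so that $\rho_{\Phi_2,\epsilon s^{\gamma}}(f)\leq 1$, and then unpacks $\rho_{\Phi_1,\epsilon t^{\gamma}}(Tf)\leq C$ directly; this needs only monotonicity of $\Phi_1,\Phi_2$ and no continuity or attainment whatsoever. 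You instead run an intermediate-value argument on $y\mapsto yQ(y)$, which requires $Q$ to be finite, continuous, and to sweep $(0,\infty)$, and you assert analogous regularity for $P$ (``$P,Q$ are continuous, nondecreasing, and run from $0$ to $\infty$,'' ``the infimum is attained at the crossing''). Those assertions are not true in general: $Tf$ is merely measurable, so $P$ can jump from finite to $+\infty$ at a finite threshold (think of exponential-type $\Phi_1$), and since ``simple'' controls Lebesgue measure but not $t^{\gamma}dt$-measure, $Q$ can be eventually or identically infinite, in which case $yQ(y)$ need not attain the value $xP(x)$ and the crossing characterizations $\alpha=P(1/\alpha)$, $\beta=Q(1/\beta)$ fail.

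These degenerate cases are all repairable within your framework — monotone convergence gives left-continuity of $P$ and $Q$ where finite; if $Q$ is everywhere finite the swept inequality itself forces $P$ finite (otherwise the dilated gauges of $f$ tend to $0$ while those of $Tf$ stay bounded below, contradicting (G)); and when $yQ(y)$ never reaches $xP(x)$ one still gets $y_0\leq C$ for the finiteness threshold $y_0$ of $Q$, so that taking $K=2C$ makes the right side of (M) infinite and the inequality vacuous — but you did not carry these repairs out, and the ``degenerate cases'' your last paragraph flags (infinite right-hand side, finiteness of $P$ near $0$) are not the operative ones. So: same approach as the paper, with your intermediate-value bookkeeping buying a pleasantly explicit real-variable reformulation ($xP(x)=yQ(y)\Rightarrow y\leq Cx$, hence $P(x)\leq C\,Q(Cx)$), at the cost of regularity hypotheses the paper's normalization trick avoids entirely; either add the repairs above or adopt the paper's choice of $\epsilon$, which sidesteps the issue.
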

If $\Phi_1$ and $\Phi_2$ are $s$-convex, that is, 
\begin{equation*}
\Phi_{i} (\alpha x + \beta y) \leq  \alpha^s \Phi_{i}(x)+ \beta^s \Phi_{i}(y), \ \ \ i=1,2,
\end{equation*}
where $0<s<1$ and $\alpha,\beta, x, y \in \mathbb{R_+}$, $\alpha^s + \beta^s=1$, then (\ref{Gauge}) and (\ref{Modular}) are each equivalent to 
\begin{equation}\label{operator continuity}
T : L_{\Phi_2,t^{\gamma}}(\mathbb{R_+}) \rightarrow L_{\Phi_1,t^{\gamma}}(\mathbb{R_+}), \tag{I}
\end{equation}
the injection (\ref{operator continuity}) being continuous with respect to certain metrics defined in terms of $\rho_{\Phi_{1},t^{\gamma}}$ and $\rho_{\Phi_{i},t^{\gamma}}$. See \Cref{operator mapping implies norm inequality} in \Cref{Orlicz classes} below.

The specific dilation-commuting operators we focus on are the Hardy operators
%Examples of such operators are the Hardy operators
\begin{equation*}
(P_pf)(t)= t^{-\frac{1}{p}} \int_0^t f(s)s^{\frac{1}{p}-1}ds \ \  \text{and} \ \  (Q_qf)(t)= t^{-\frac{1}{q}} \int_t^{\infty} f(s)s^{\frac{1}{q}-1}ds,
\end{equation*}
where 
%$1<p,q < \infty$
$p,q \in \mathbb{R_+}$ 
and $f$ is a nonnegative Lebesgue-measurable function on $\mathbb{R}$; the Hardy-Littlewood maximal function
\begin{equation*}
(Mf)(x) = \sup_{\substack{x \in  I \\ I \text{ is an interval}}} \frac{1}{|I|} \int_I
|f(y)|dy, \ f \in M(\mathbb{R}), \ x \in \mathbb{R};
\end{equation*}
the Hilbert transform
\begin{equation*}
(Hf)(x) =  \frac{1}{\pi} \  \text{(P)} \int_{\mathbb{R}} \frac{f(y)}{x-y}dy  = \lim_{\epsilon \rightarrow 0^{+}} \frac{1}{\pi} \int_{|x-y|> \epsilon} \frac{f(y)}{x-y}dy,
\end{equation*}
with $f \in L_1 \left( \frac{1}{1+|y|} \right)$, that is, $\int_{\mathbb{R}} \frac{|f(y)|}{1+ |y|}dy < \infty $, and $x \in \mathbb{R}$.

It is not hard to show that if the inequality (\ref{Gauge}) or (\ref{Modular}) holds for any of these operators when $f \in S(\mathbb{R_+})$, then it holds for that operator when $f$ belongs to its natural domain, that is those $f$ for which the right side of the inequality is finite.

The above operators will be treated in \Cref{operators Pp and Qq}, \Cref{Maximal function} and \Cref{The Hilbert transform}, respectively, following the proof of \Cref{main theorem} in \Cref{proof of main theorem}. Background on gauges such as $\rho_{\Phi,t^{\gamma}}$ is given in \Cref{Orlicz classes}.

\section{Orlicz classes}\label{Orlicz classes}
Let $(X,M,\mu)$ be a totally $\sigma$-finite measure space and denote by $M(X)$ the set of $\mu$-measurable functions from $X$ to the real line $\mathbb{R}$. Given a nondecreasing function $\Phi$ from $\mathbb{R_+}$ onto itself its corresponding Orlicz class is
\begin{equation*}
L_{\Phi,\mu}(X) = \left\lbrace f \in M(X): \int_X \Phi (k|f(x)|)d\mu(x)< \infty, \ \  \text{for some } \ k \in \mathbb{R_+} \right\rbrace.
\end{equation*}
The functional $\rho_{\Phi,\mu}$ defined at $f \in M(X)$ by
\begin{equation*}
\rho_{\Phi,\mu}(f) = \inf \left\lbrace \lambda>0: \int_X \Phi \left( \frac{|f(x)|}{\lambda} \right) \frac{d\mu(x)}{\lambda} \leq 1  \right\rbrace
\end{equation*}
is finite if and only if $f \in L_{\Phi,\mu}(X)$.

This functional has the following properties
\begin{enumerate}
\item $\rho_{\Phi,\mu}(f)= \rho_{\Phi,\mu}(|f|) \geq 0$, with $\rho_{\Phi,\mu}(f)=0$ if and only if $f=0$ $\mu$-a.e.;
\item $\rho_{\Phi,\mu}(cf)$ is a nondecreasing function of $c$ from $\mathbb{R_+}$ onto itself if $f \neq 0$ $\mu$-a.e.;
\item $\rho_{\Phi,\mu}(f+g) \leq \rho_{\Phi,\mu}(f) + \rho_{\Phi,\mu}(g)$;
\item $0 \leq f_n \uparrow f$ implies $\rho_{\Phi,\mu}(f_n) \uparrow \rho_{\Phi,\mu}(f)$;
\item $\rho_{\Phi,\mu}(\chi_{E})<\infty$ for all $E \subset X$ such that $\mu(E)<\infty$.
\end{enumerate}
See [M], for example.

The functional $\rho_{\Phi,\mu}$ is a so-called $F$-norm on the linear space $L_{\Phi,\mu}(X)$ that makes it into a complete linear topological space under the metric
\begin{equation*}
d_{\Phi,\mu}(f,g)=\rho_{\Phi,\mu}(f-g).
\end{equation*}
If $\Phi$ is $s$-convex for some $s \in (0,1]$ then,
\begin{equation*}
\rho_{\Phi,\mu}( \alpha f+ \beta g) \leq \alpha^s \rho_{\Phi,\mu}(f) + \beta^s \rho_{\Phi,\mu}(g),
\end{equation*}
when $f, g \in L_{\Phi,\mu}(X)$. Moreover, the functional
\begin{equation*}
\rho_{\Phi,\mu}^{(s)}(f) = \inf \left\lbrace \lambda>0: \int_X \Phi \left( \frac{|f(x)|}{\lambda^{1/s}} \right) d\mu(x) \leq 1  \right\rbrace
\end{equation*}
satisfies
\begin{equation*}
\rho_{\Phi,\mu}^{(s)}(cf)= c^s \rho_{\Phi,\mu}^{(s)}(f), \ \ \ c\geq 0, 
\end{equation*}
as well as properties $1-5$ above, so, in particular, $\rho_{\Phi,\mu}^{(1)}(f)$ is a norm. Finally, the metric
\begin{equation*}
d_{\Phi,\mu}^{(s)}(f,g)= d_{\Phi,\mu}^{(s)}(f-g)
\end{equation*}
is equivalent to the metric $d_{\Phi,\mu}(f,g)$.

\begin{proposition}\label{operator mapping implies norm inequality}
Let $(X,M,\mu)$ and $(Y,N,\nu)$ be totally $\sigma$-finite measure spaces. Suppose $\Phi_1$ and $\Phi_2$  are $s$-convex nondecreasing functions from $\mathbb{R_+}$ onto itself. Then, the linear operator $T$ mapping $L_{\Phi_2,\nu}(Y)$ continuously into $L_{\Phi_1,\mu}(X)$ (with respect to the metrics defined by $\rho_{\Phi_2,\nu}^{(s)}$ and $\rho_{\Phi_1,\mu}^{(s)}$) implies
\begin{equation}
\rho_{\Phi_1,\mu}^{(s)}(Tf) \leq C \rho_{\Phi_2,\nu}^{(s)}(f) \tag{$G^{(s)}$},
\end{equation}
with $C>0$ independent of $f \in L_{\Phi_2,\nu}(Y)$.
\end{proposition}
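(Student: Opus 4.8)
The plan is to reduce the continuity hypothesis to continuity at the origin and then convert it into the homogeneous bound $(G^{(s)})$ by a one-parameter scaling argument; the crux is that the functional $\rho^{(s)}_{\Phi,\mu}$ is \emph{exactly} $s$-homogeneous, which is what produces a clean multiplicative constant.

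First I would record the structural facts about $\rho^{(s)}_{\Phi,\mu}$ that make its metric usable. By the properties stated just above the proposition, $\rho^{(s)}_{\Phi,\mu}$ is subadditive (property 3) and positively $s$-homogeneous, $\rho^{(s)}_{\Phi,\mu}(cf)=c^s\,\rho^{(s)}_{\Phi,\mu}(f)$ for $c\ge 0$. Consequently $d^{(s)}_{\Phi,\mu}(f,g)=\rho^{(s)}_{\Phi,\mu}(f-g)$ is a genuine translation-invariant metric. Since $T$ is linear, $Tf-Tg=T(f-g)$, so $d^{(s)}_{\Phi_1,\mu}(Tf,Tg)=\rho^{(s)}_{\Phi_1,\mu}(T(f-g))=d^{(s)}_{\Phi_1,\mu}(T(f-g),0)$, and the analogous identity holds on the domain; together these show that continuity of $T$ with respect to the two metrics is equivalent to continuity of $T$ at $0$ (noting $T0=0$).

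Next I would extract a single quantitative estimate from continuity at $0$. Applying the definition with $\varepsilon=1$ yields a $\delta>0$ such that $\rho^{(s)}_{\Phi_2,\nu}(f)<\delta$ implies $\rho^{(s)}_{\Phi_1,\mu}(Tf)<1$. Now fix $f\in L_{\Phi_2,\nu}(Y)$ with $f\neq 0$ and set $r=\rho^{(s)}_{\Phi_2,\nu}(f)>0$ (the case $f=0$ is trivial, both sides vanishing). For any $c>0$ with $c^s r<\delta$, homogeneity gives $\rho^{(s)}_{\Phi_2,\nu}(cf)=c^s r<\delta$, hence $\rho^{(s)}_{\Phi_1,\mu}(T(cf))<1$; using $T(cf)=c\,Tf$ and homogeneity again, $c^s\,\rho^{(s)}_{\Phi_1,\mu}(Tf)<1$, i.e. $\rho^{(s)}_{\Phi_1,\mu}(Tf)<c^{-s}$. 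Letting $c^s\uparrow \delta/r$ then gives $\rho^{(s)}_{\Phi_1,\mu}(Tf)\le r/\delta$, which is exactly $(G^{(s)})$ with $C=1/\delta$.

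The verification that $d^{(s)}_{\Phi,\mu}$ is translation invariant and the reduction to the origin are routine; the single point deserving care—and the only real obstacle—is that the explicit constant comes from the exact $s$-homogeneity of $\rho^{(s)}_{\Phi,\mu}$ rather than from mere monotonicity in the scalar. Accordingly I would be careful to invoke $\rho^{(s)}_{\Phi,\mu}(cf)=c^s\,\rho^{(s)}_{\Phi,\mu}(f)$ at both the scaling step and the passage to the limit, and to take the supremum over admissible $c$ in order to convert the strict inequality $\rho^{(s)}_{\Phi_1,\mu}(Tf)<c^{-s}$ into the asserted non-strict bound.
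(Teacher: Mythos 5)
Your proposal is correct and follows essentially the same route as the paper: both arguments convert continuity at a single point into a quantitative statement (a $\delta$-ball in $\rho^{(s)}_{\Phi_2,\nu}$ mapped into the unit ball of $\rho^{(s)}_{\Phi_1,\mu}$) and then exploit linearity together with the exact $s$-homogeneity $\rho^{(s)}_{\Phi,\mu}(cf)=c^s\,\rho^{(s)}_{\Phi,\mu}(f)$ to rescale an arbitrary $f$, yielding $C\approx 1/\delta$. The only differences are cosmetic: the paper performs your reduction to the origin inline at an arbitrary base point $f_0$ via the identity $Tg=T(g+f_0)-Tf_0$ and fixes $\eta<\delta$ to obtain $C=\eta^{-1}$, whereas you reduce to continuity at $0$ explicitly and pass to the limit $c^s\uparrow\delta/r$ to sharpen the constant to $C=1/\delta$.
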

\begin{proof}
Fix $f_0 \in L_{\Phi_2,\nu}(Y)$. Since $T$ is continuous at $f_0$, there is a $\delta>0$ such that
\begin{equation*}
\rho_{\Phi_1,\mu}^{(s)}(Tf-Tf_{0}) < 1
\end{equation*}
for all $f \in L_{\Phi_2,\nu}(Y)$ satisfying $\rho_{\Phi_2,\nu}^{(s)}(f-f_0)< \delta$. Given $f \in L_{\Phi_2,\nu}(Y)$, set $g= \frac{\eta^{1/s}}{\rho_{\Phi_2,\nu}^{(s)}(f)^{1/s}}f$, for a fixed $\eta, 0 < \eta < \delta$. Then,
\begin{equation*}
\frac{\eta^{1/s}}{\rho_{\Phi_2,\nu}^{(s)}(f)^{1/s}} Tf = Tg = T(g+f_0)-Tf_0
\end{equation*}
and
\begin{equation*}
  \rho^{(s)}_{\Phi_1,\mu} \left( \frac{\eta^{1/s}}{\rho_{\Phi_2,\nu}^{(s)}(f)^{1/s}} Tf \right) =  \rho^{(s)}_{\Phi_1,\mu} \left(  T(g+f_0)-Tf_0 \right) < 1,
\end{equation*}
since
\begin{equation*}
 \rho_{\Phi_2,\nu}^{(s)}\left( g+f_0 - f_0 \right)=  \rho_{\Phi_2,\nu}^{(s)}\left( g \right) \leq \eta < \delta.
\end{equation*}
Indeed,
\begin{equation*}
\int_Y \Phi_2 \left( \frac{g}{\eta
^{1/s}} \right) d\nu = \int_Y \Phi_2 \left( \frac{f}{\rho_{\Phi_2,\nu}^{(s)}(f)^{1/s}} \right)d\nu \leq 1.
\end{equation*}
Now,
\begin{equation*}
\rho^{(s)}_{\Phi_1,\mu} \left( \frac{\eta^{1/s}}{\rho_{\Phi_2,\nu}^{(s)}(f)^{1/s}} Tf \right) < 1,
\end{equation*}
implies,
\begin{equation*}
\int_X \Phi_1 \left( \frac{Tf}{ \rho_{\Phi_2,\nu}^{(s)}(f)^{1/s}  /   \eta^{1/s}    } \right) d\mu \leq 1,
\end{equation*}
which, in turn, means that
\begin{equation*}
\rho_{\Phi_1,\mu}^{(s)}(Tf) \leq \eta^{-1} \rho_{\Phi_2,\nu}^{(s)}(f).
\end{equation*}
\end{proof}

\section{\texorpdfstring{Proof of \cref{gauge norm equivalent modular}}{}}\label{proof of main theorem}
We will require the connection between a modular inequality like (\ref{Modular}) and certain gauge inequalities, (\ref{General weighted gauge}), given in some generality in
\begin{proposition}\label{Weighted modular equivalent to family of gauge}
Let $t,u,v$ and $w$ be positive measurable functions, called weights, on $\mathbb{R_+}$. Suppose $\Phi_1$ and $\Phi_2$ are nonnegative, nondecreasing functions from $\mathbb{R_+}$ onto itself. Given $\epsilon > 0$, define the weighted gauges of $f,g \in M(\mathbb{R_+})$ by
\begin{equation*}
\rho_{\Phi_2, u, \epsilon v}(f)   = \inf \left\lbrace  \lambda > 0 : \int_{\mathbb{R_+}} \Phi_2 \left( \frac{u(y)|f(y)|}{\lambda} \right)\frac{\epsilon}{\lambda} v(y)dy \leq 1   \right\rbrace 
\end{equation*}
 and
\begin{equation*}
\rho_{\Phi_1, t, \epsilon w}(g)   = \inf \left\lbrace  \lambda > 0 : \int_{\mathbb{R_+}} \Phi_2 \left( \frac{t(x)|g(x)|}{\lambda} \right)\frac{\epsilon}{\lambda} w(x)dx \leq 1   \right\rbrace .
\end{equation*}
Then, a positively homogeneous operator $T$ from $S(\mathbb{R_+})$ to $M(\mathbb{R_+})$ satisfies
\begin{equation}\label{General weighted modular}
\int_{\mathbb{R_+}} \Phi_1 \left( t(x) |(Tf)(x)| \right)w(x)dx \leq K\int_{\mathbb{R_+}} \Phi_2 \left( K u(y)|f(y)| \right)v(y)dy, \tag{M}
\end{equation}
if and only if it satisfies the modular inequalities
\begin{equation}\label{General weighted gauge}
\rho_{\Phi_1,t, \epsilon w}(Tf) \leq C \rho_{\Phi_2,u, \epsilon v}(f), \tag{ $G_{\epsilon}$ }
\end{equation}
in which $K>0$ is independent of  $f \in S(\mathbb{R_+})$  and $C>0$ is independent of both $f \in S(\mathbb{R_+})$ \textbf{and} $ \epsilon > 0$.
\end{proposition}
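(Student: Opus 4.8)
The plan is to reduce both \eqref{General weighted modular} and \eqref{General weighted gauge} to statements about the two nonincreasing auxiliary functions
\[
P(\lambda) = \int_{\mathbb{R_+}} \Phi_1\!\left(\frac{t(x)|(Tf)(x)|}{\lambda}\right) w(x)\,dx, \qquad Q(\lambda) = \int_{\mathbb{R_+}} \Phi_2\!\left(\frac{u(y)|f(y)|}{\lambda}\right) v(y)\,dy,
\]
attached to a fixed $f \in S(\mathbb{R_+})$. In this notation the natural normalisation of \eqref{General weighted modular} reads $P(1) \le K\,Q(1/K)$, while $\rho_{\Phi_2,u,\epsilon v}(f) = \inf\{\lambda>0 : \epsilon\,Q(\lambda)\le\lambda\}$ and $\rho_{\Phi_1,t,\epsilon w}(Tf) = \inf\{\lambda>0:\epsilon\,P(\lambda)\le\lambda\}$. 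Both $P$ and $Q$ are nonincreasing since $\Phi_1,\Phi_2$ are nondecreasing, and this monotonicity, together with the elementary description of an infimum, is all the regularity I expect to need; in particular I would avoid any appeal to continuity of the $\Phi_i$.

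For the implication \eqref{General weighted modular} $\Rightarrow$ \eqref{General weighted gauge} I would fix $\epsilon>0$ and $f\in S(\mathbb{R_+})$ and set $\mu=\rho_{\Phi_2,u,\epsilon v}(f)$, so that $Q(\lambda)\le\lambda/\epsilon$ for every $\lambda>\mu$. Applying \eqref{General weighted modular} to the (still simple) function $f/(K\lambda)$ and using positive homogeneity, $|T(f/(K\lambda))|=|Tf|/(K\lambda)$, the two sides become $P(K\lambda)$ and $K\,Q(\lambda)\le K\lambda/\epsilon$; thus $\epsilon\,P(K\lambda)/(K\lambda)\le 1$, so $K\lambda$ is admissible for the gauge of $Tf$ and $\rho_{\Phi_1,t,\epsilon w}(Tf)\le K\lambda$. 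Letting $\lambda\downarrow\mu$ yields \eqref{General weighted gauge} with $C=K$; note the factor $\epsilon$ has cancelled, so $C$ is independent of $\epsilon$.

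The substantive direction is \eqref{General weighted gauge} $\Rightarrow$ \eqref{General weighted modular}, and here I would exploit that the \emph{same} $C$ is available for \emph{every} $\epsilon$. Assuming first $0<P(1)<\infty$, I would choose the single value $\epsilon = 1/P(1)$ tailored to $f$. Since $P$ is nonincreasing, $\epsilon\,P(\lambda)\ge \epsilon\,P(1)=1>\lambda$ for every $\lambda<1$, so $\rho_{\Phi_1,t,\epsilon w}(Tf)\ge 1$; the gauge inequality then forces $\rho_{\Phi_2,u,\epsilon v}(f)\ge 1/C$. By the definition of the infimum this means $\epsilon\,Q(\lambda)>\lambda$ for every $\lambda<1/C$, and evaluating at $\lambda = 1/(2C)$ gives $Q(1/(2C)) > P(1)/(2C)$, that is $P(1) < 2C\,Q(1/(2C))$, which is precisely \eqref{General weighted modular} with $K = 2C$. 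The cases $P(1)=0$ (trivial) and $P(1)=\infty$ (run the same estimate with $\epsilon=1/L$ and let $L\to\infty$ to deduce $Q(1/(2C))=\infty$) are disposed of separately.

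The one place that requires care — and the reason the hypothesis quantifies over all $\epsilon>0$ with a constant independent of $\epsilon$ — is this choice $\epsilon=1/P(1)$ in the reverse direction: it depends on $f$, so were $C$ allowed to vary with $\epsilon$ the resulting $K=2C$ would vary with $f$ and \eqref{General weighted modular} would fail. I therefore expect the main conceptual obstacle to be recognising this normalisation and checking that the monotonicity of $P$ and $Q$ lets the infima behave as claimed with no continuity assumption on the $\Phi_i$; the computations themselves are short.
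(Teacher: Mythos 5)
Your proof is correct, and its engine is the same as the paper's: exploit the uniformity of $C$ in $\epsilon$ by choosing a single $\epsilon$ tailored to the fixed $f$. Your direction (M) $\Rightarrow$ $(G_{\epsilon})$ coincides with the paper's computation (apply (M) to $f/(K\lambda)$, multiply through by $\epsilon$, read off admissibility of $K\lambda$), except that you work with $\lambda>\mu$ strictly and pass to the limit only in the conclusion, whereas the paper evaluates the gauge constraint at the infimum $\alpha=\rho_{\Phi_2,u,\epsilon v}(f)$ itself --- a step that, for general nondecreasing $\Phi_2$, needs a Fatou or left-continuity remark that your variant renders unnecessary. In the substantive direction the two normalizations are mirror images: the paper sets $\epsilon=\bigl(\int_{\mathbb{R_+}}\Phi_2(u(y)|f(y)|)v(y)dy\bigr)^{-1}$, making the \emph{right-hand} modular have gauge $\rho_{\Phi_2,u,\epsilon v}(f)\le 1$, deduces $\rho_{\Phi_1,t,\epsilon w}(Tf)\le C$ from $(G_{\epsilon})$, and then rescales $f\mapsto Cf$ to land on (M) with $K=C$; you instead set $\epsilon=1/P(1)$, normalizing the \emph{left-hand} modular so that $\rho_{\Phi_1,t,\epsilon w}(Tf)\ge 1$, and convert the forced lower bound $\rho_{\Phi_2,u,\epsilon v}(f)\ge 1/C$ into non-admissibility of $\lambda=1/(2C)$, i.e.\ $P(1)<2C\,Q(1/(2C))$, which is (M) with $K=2C$. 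What your mirror buys: you never invert the right-hand modular, so the degenerate cases $\int\Phi_2(u|f|)v\in\{0,\infty\}$, where the paper's choice of $\epsilon$ is undefined (harmless, since (M) is then trivial or handled by $f\not\equiv 0$, but passed over in silence), are covered by your explicit case split on $P(1)$, including the genuinely needed limiting argument $\epsilon=1/L$, $L\to\infty$, when $P(1)=\infty$; you also correctly isolate monotonicity of $P$ and $Q$ as the only regularity used. The price is the marginally worse constant $K=2C$ in place of the paper's $K=C$, and no final rescaling trick is available to recover it. (Incidentally, like the paper's own proof, you tacitly correct the misprint in the statement, where $\rho_{\Phi_1,t,\epsilon w}$ is defined with $\Phi_2$ inside the integral.)
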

\begin{proof}
Suppose (\ref{General weighted gauge}) holds. Fix $f \in S(\mathbb{R_+})$, $f \not\equiv 0$, and put
\begin{equation*}
\epsilon = \left( \int_\mathbb{R_+} \Phi_2 \left(  u(y)|f(y)| \right)v(y)dy \right)^{-1}.
\end{equation*}
Then,
\begin{equation*}
\int_\mathbb{R_+} \Phi_2 \left(  u(y)|f(y)| \right) \epsilon v(y)dy = 1,
\end{equation*}
so
\begin{equation*}
\rho_{\Phi_2,u, \epsilon v} (f) \leq 1,
\end{equation*}
whence (\ref{General weighted gauge}) implies
\begin{equation*}
\rho_{\Phi_1,t, \epsilon w}(Tf) \leq C.
\end{equation*}
Thus,
\begin{equation*}
\int_\mathbb{R_+} \Phi_1 \left( \frac{t(x) |(Tf)(x)|}{C} \right)\frac{w(x)}{C}dx \leq  \frac{1}{\epsilon} = \int_\mathbb{R_+} \Phi_2 \left(  u(y)|f(y)| \right)v(y)dy.
\end{equation*}
Replacing $f$ by $Cf$ and using the fact that $T$ is positively homogeneous yields (\ref{General weighted modular}), with $K=C$.

For the converse, fix $f \in S(\mathbb{R_+})$ and $\epsilon>0$. Let $\alpha = \rho_{\Phi_2,u, \epsilon v}(f)$, so that
\begin{equation*}
\int_\mathbb{R_+} \Phi_2 \left(    \frac{u(y)|f(y)|}{\alpha}  \right) \frac{\epsilon}{\alpha} v(y)dy \leq 1.
\end{equation*}
By (\ref{General weighted modular}), then,
\begin{equation*}
\begin{split}
\int_\mathbb{R_+} \Phi_1 \left(    \frac{t(x)|(Tf)(x)|}{K \alpha}  \right) \frac{\epsilon}{K\alpha} w(x)dx & =  \epsilon \int_\mathbb{R_+} \Phi_1 \left(    \frac{t(x)|(Tf)(x)|}{K \alpha}  \right)  \frac{w(x)}{K\alpha}dx \\
&  \leq \int_\mathbb{R_+} \Phi_2 \left(    \frac{u(y)|f(y)|}{\alpha}  \right) \frac{\epsilon}{\alpha} v(y)dy \\
& \leq 1,
\end{split}
\end{equation*}
which amounts to
\begin{equation*}
\rho_{\Phi_1,t, \epsilon w}(Tf) \leq K \alpha = C \rho_{\Phi_2,u, \epsilon v}(f),
\end{equation*}
with $C=K>0$ independent of $f \in S(\mathbb{R_+})$ and $\epsilon>0$.
\end{proof}

\begin{proof}[Proof of \cref{gauge norm equivalent modular}]
According to \Cref{Weighted modular equivalent to family of gauge}, the modular inequality (\ref{Modular}) is equivalent to the family of uniform gauge inequalities
\begin{equation}\label{Weighted gauge}
\rho_{\Phi_1,\epsilon t^{\gamma}}(Tf) \leq C \rho_{\Phi_2, \epsilon t^{\gamma}}(f) \tag{$G_{\epsilon}^{\gamma}$}
\end{equation}
with $C>0$  independent of both $f$ and $\epsilon > 0$.

In particular, ($ \textcolor{red}{G_{1}^{\gamma}}$) is (\ref{Gauge}), so (\ref{Modular}) implies (\ref{Gauge}).

Next, we prove (\ref{Gauge}) implies (\ref{Weighted gauge}), which amounts to showing
\begin{equation*}
\int_\mathbb{R_+} \Phi_1 \left( \frac{|(Tf)(t)|}{  C \rho_{\Phi_2, \epsilon s^{\gamma}}(f)   } \right) \frac{\epsilon t^{\gamma}}{ C \rho_{\Phi_2, \epsilon s^{\gamma}}(f) }  dt \leq 1.
\end{equation*}
Letting $z= \epsilon^{\delta}t$, $\delta = \frac{1}{1+ \gamma}$, the latter reads
\begin{equation*}
\int_\mathbb{R_+} \Phi_1 \left( \frac{|(Tf)(z/ \epsilon^{\delta})|}{  C \rho_{\Phi_2, \epsilon s^{\gamma}}(f)   } \right) \frac{z^{\gamma}} { C \rho_{\Phi_2, \epsilon s^{\gamma}}(f)  } dz \leq 1,
\end{equation*}
or, since $T$ commutes with dilations,
\begin{equation*}
\int_\mathbb{R_+} \Phi_1 \left( \frac{|T(f( \frac{1}{\epsilon^{\delta}} \cdot ))(z)|}{  C \rho_{\Phi_2, \epsilon s^{\gamma}}(f)   } \right) \frac{z^{\gamma}} { C \rho_{\Phi_2, \epsilon s^{\gamma}}(f)  } dz \leq 1.
\end{equation*}
But,
\begin{equation*}
\begin{split}
\rho_{\Phi_2, \epsilon s^{\gamma}}(f) &  = \inf \left\lbrace  \lambda > 0 : \int_\mathbb{R_+} \Phi_2 \left( \frac{|f(s)|}{\lambda} \right) \frac{\epsilon}{\lambda} s^{\gamma}ds \leq 1   \right\rbrace \\
& = \inf \left\lbrace  \lambda > 0 : \int_\mathbb{R_+}\Phi_2 \left( \frac{|{ \textstyle{\left( f \left( \frac{1}{\epsilon^{\delta}} y \right) \right)}}|}{\lambda} \right) \frac{y^{\gamma}} {\lambda}dy \leq 1   \right\rbrace\\
& = \rho_{\Phi_2, t^{\gamma}} { \textstyle{\left( f \left( \frac{1}{\epsilon^{\delta}} \cdot \right) \right)}},
\end{split}
\end{equation*}
where in the first equality, we have made the change of variable $s= y / \epsilon^{\delta}$. Altogether, then, (\ref{Weighted gauge}) is the same as (\ref{Gauge}), with $f$ replaced by $f \left( \frac{1}{\epsilon^{\delta}} \cdot \right) $.
\end{proof}

\section{ \texorpdfstring {The operators $P_p$ and $Q_q$}{}}\label{operators Pp and Qq}
We will sometimes need to work with  nonnegative, nondecreasing $\Phi$ on $\mathbb{R_+}$ that are Young functions, by which is meant
\begin{equation*}
\Phi(t)= \int_0^t \phi(s)ds, \  t \in \mathbb{R_+},
\end{equation*}
where $\phi$ is nondecreasing on $\mathbb{R_+}$, with $\phi(0^{+})=0$ and $\lim_{t \rightarrow \infty} \phi(s)=\infty$. The Young function, $\Psi$, complementary to such a $\Phi$ is defined by
\begin{equation*}
\Psi(t)= \int_0^t \phi^{-1}(s)ds, \  t \in \mathbb{R_+}.
\end{equation*}
%\begin{lemma}\label{I}
%Let $t,u,v$ and $w$ be weights on $\mathbb{R_+}$. Suppose $\Phi_1$ and $\Phi_2$ are nondecreasing functions from $\mathbb{R_+}$ to $\mathbb{R_+}$.
%
%Then, the weighted modular inequality for
%\begin{equation*}
%(If)(x)= \int_0^x f(y)dy, \ f \in M_+(\mathbb{R_+}),
%\end{equation*}
%namely,
%\begin{equation}\label{eq:m}
%\int_{\mathbb{R_+}} \Phi_1(w(x)If(x))t(x)dx \leq \int_{\mathbb{R_+}} \Phi_2(Ku(y)f(y))v(y)dy, \tag{M}
%\end{equation}
%implies the weighted weak-type modular inequality
%\begin{equation}\label{eq:wm}
%\int_{\{x:(If)(x) > \lambda \}}\Phi_1(\lambda w(x))t(x)dx \leq \int_{\mathbb{R_+}} \Phi_2(Ku(y)f(y))v(y)dy, \tag{WM}
%\end{equation}
%in both of which $K>0$ is independent of $f \in M_+(\mathbb{R_+})$.
%
%Moreover, under the additional assumption that $\Phi_2$ is a Young function, $(\ref{eq:wm})$ holds if and only if
%\begin{equation}\label{Kerman condition}
%\int_0^x \Psi_2 \left( \frac{\alpha(\lambda,x)}{C \lambda u(y)v(y)} \right)v(y)dy \leq \alpha(\lambda , x) < \infty,
%\end{equation}
%with the constant $C>0$ independent of $\lambda, x \in \mathbb{R_+}$ and
%\begin{equation*}
%\alpha(\lambda,x)= \int_x^{\infty} \Phi_1(\lambda w(y))t(y)dy.
%\end{equation*}
%\end{lemma}
%
%\begin{corollary}\label{lambda equals 1}
%In case $w(x)= x^{-\alpha}$, $u(y)= y^{1-\alpha}$, $t(x)=x^{\gamma}$ and $v(y)= y^{\gamma}$, the condition (\ref{Kerman condition}) holds for all $\lambda > 0$ if and only if it holds for $\lambda = 1$.
%\end{corollary}

\begin{untheorem}\label{characterization of $P_p$ and $Q_q$}
Fix $p, \gamma \in \mathbb{R}, \gamma \neq -1, \gamma +1 - {\textstyle\frac{1}{p}}  \neq 0$. Let $P_p$ be defined as in the introduction.
Suppose that $\Phi_1$ and $\Phi_2$ are nonnegative, nondecreasing functions from $\mathbb{R_+}$ onto itself and that, for $i=1,2$, $\rho_{\Phi_i,t^{\gamma}}$ are the corresponding F-norms on $L_{\Phi_i, t^{\gamma}}(\mathbb{R_+})$.

Then, the following are equivalent:
\begin{equation}\label{(1a)}
\rho_{\Phi_1,t^{\gamma}}(P_pf) \leq L \, \rho_{\Phi_2,t^{\gamma}}(f), \tag{G}
\end{equation}
$L>0$ being independent of $0 \leq f \in  M(\mathbb{R_+})$;
\begin{equation}\label{(2a)}
\int_{\mathbb{R_+}} \Phi_1 \left( (P_pf)(t) \right)t^{\gamma}dt \leq K \int_{\mathbb{R_+}} \Phi_2 \left( K f(s) \right)s^{\gamma}ds,  \tag{M}
\end{equation}
in which $K>0$ is independent of $0 \leq f \in  M(\mathbb{R_+})$.

Further, if $\Phi_1$ and $\Phi_2$ are $s$-convex for some $s$, $0 < s \leq 1$, then  (\ref{(1a)}) and (\ref{(2a)}) are equivalent to 
\begin{equation}\label{operator Pp continuity}
P_p : L_{\Phi_2,t^{\gamma}}(\mathbb{R_+}) \rightarrow L_{\Phi_1,t^{\gamma}}(\mathbb{R_+}), \tag{I}
\end{equation}
the injection (\ref{operator Pp continuity}) being continuous with respect to the metrics defined in terms of the $s$-convex F-norms $\rho_{\Phi_1}^{(s)}$ and $\rho_{\Phi_2}^{(s)}$.

Finally, for a general nondecreasing function $\Phi_1$ and a Young function $\Phi_2$, with complementary function $\Psi_2$, (\ref{(1a)}) and (\ref{(2a)}) are equivalent to
\begin{equation}\label{(3a)}
\int_0^t \Psi_2 \left( \frac{\alpha(t)}{C s^{1- \frac{1}{p}+ \gamma}} \right) s^{\gamma}ds \leq \alpha(t)= \int_t^{\infty} \Phi_1( s^{- \frac{1}{p}}) s^{\gamma}ds < \infty, \tag{BK}
\end{equation}
or
\begin{equation*}
\int_0^t \phi_2^{-1} \left( \frac{\alpha(t)}{C s^{1- \frac{1}{p}+ \gamma}} \right) s^{\frac{1}{p}-1}ds \leq C, \ \ t \in \mathbb{R_+}.
\end{equation*}
\end{untheorem}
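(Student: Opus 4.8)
The plan is to peel off the two easy equivalences first and then concentrate on the genuinely new content, the equivalence of the modular inequality (M) with the integral condition (BK). Since $P_p$ is linear, positively homogeneous, and dilation-commuting (the substitution $s=\lambda r$ in $t^{-1/p}\int_0^{\lambda t}f(s)s^{1/p-1}\,ds$ gives $(P_pf)(\lambda t)=P_p(f(\lambda\,\cdot))(t)$), and since $\gamma\neq-1$, \Cref{main theorem} applies verbatim and yields (G)$\Leftrightarrow$(M). For the $s$-convex case, \Cref{operator mapping implies norm inequality} gives that continuity of the injection (I) implies the gauge inequality written with $\rho^{(s)}$, while the reverse implication is immediate from linearity: $|P_p(f-g)|\le P_p|f-g|$ together with (G) gives $\rho_{\Phi_1,t^\gamma}(P_pf-P_pg)\le L\,\rho_{\Phi_2,t^\gamma}(f-g)$, i.e. Lipschitz continuity, and the passage between $\rho$ and its $s$-homogeneous companion $\rho^{(s)}$ uses only the metric equivalence recorded in \Cref{Orlicz classes}. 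I would also record at the outset that the two displayed forms of (BK) are equivalent: because $\phi_2^{-1}$ is nondecreasing, $\tfrac12\,y\,\phi_2^{-1}(y/2)\le\Psi_2(y)\le y\,\phi_2^{-1}(y)$, and substituting this into the first form, with $\sigma:=1-\tfrac1p+\gamma$ and $\gamma-\sigma=\tfrac1p-1$, turns it into the second after adjusting $C$.

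For the equivalence (M)$\Leftrightarrow$(BK) I write $\beta=1/p$ and note first the meaning of $\alpha$: if $f\ge0$ is supported in $(0,t)$ with unit mass $\int_0^t f(s)s^{\beta-1}\,ds=1$, then $(P_pf)(x)=x^{-\beta}$ for $x\ge t$, so the part of the left side of (M) over $(t,\infty)$ is exactly $\alpha(t)=\int_t^\infty\Phi_1(x^{-\beta})x^\gamma\,dx$. Discarding the rest of the left side, (M) forces $\alpha(t)\le K\int_0^t\Phi_2(Kf(s))s^\gamma\,ds$ for every such $f$ (in particular $\alpha(t)<\infty$). To extract (BK) I would minimise the right side over unit-mass $f$: a Lagrange computation identifies the extremal profile $Kf(s)=\phi_2^{-1}(\theta\,s^{-\sigma})$, which saturates Young's inequality in the form $\Phi_2(\phi_2^{-1}(y))=y\,\phi_2^{-1}(y)-\Psi_2(y)$. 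Inserting this $f$ and using $\gamma-\sigma=\beta-1$ evaluates $\int_0^t\Phi_2(Kf)s^\gamma\,ds$ in closed form as a combination of $\int_0^t\Psi_2(\theta s^{-\sigma})s^\gamma\,ds$, and taking $\theta$ of order $\alpha(t)$ rearranges the testing inequality into $\int_0^t\Psi_2\!\big(\tfrac{\alpha(t)}{Cs^\sigma}\big)s^\gamma\,ds\le\alpha(t)$, which is (BK).

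The converse, (BK)$\Rightarrow$(M), is the crux. Writing $B(t)=\int_0^t f(s)s^{\beta-1}\,ds$ so that $(P_pf)(t)=t^{-\beta}B(t)$, I would apply Young's inequality pointwise with $a=Kf(s)$ and $b=\tfrac{\alpha(t)}{Cs^\sigma}$ and integrate against $s^\gamma\,ds$ on $(0,t)$; since $\gamma-\sigma=\beta-1$ the left side becomes $\tfrac{K\alpha(t)}{C}B(t)$, and (BK) bounds the $\Psi_2$-term by $\alpha(t)$, giving a local estimate $B(t)\le C'\big(1+\alpha(t)^{-1}\int_0^t\Phi_2(Kf)s^\gamma\,ds\big)$. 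The difficulty is that $\Phi_1$ is only nondecreasing, with no convexity or doubling, so feeding this single pointwise bound into $\int_0^\infty\Phi_1((P_pf)(t))t^\gamma\,dt$ is far too lossy near $t=0$, where $\alpha$ may blow up. I would instead decompose $(0,\infty)$ into the level sets $E_k=\{t:2^k<B(t)\le2^{k+1}\}$, which are intervals since $B$ is nondecreasing; on $E_k$ one has $(P_pf)(t)\le 2^{k+1}t^{-\beta}$, so that $\int_{E_k}\Phi_1(P_pf)t^\gamma\,dt\le (2^{k+1})^{(1+\gamma)/\beta}\,\alpha\big(a_k(2^{k+1})^{-1/\beta}\big)$ after the substitution $u=(2^{k+1})^{-1/\beta}t$, where $a_k$ is the left endpoint of $E_k$.

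It then remains to sum these local estimates, and this is the step I expect to be hardest. The key is that on each $E_k$ the increment $\int_{E_k}f(s)s^{\beta-1}\,ds=B(a_{k+1})-B(a_k)=2^k$, so applying Young's inequality on $E_k$ and invoking (BK) at the breakpoint $a_{k+1}$ lets me bound each term by a constant multiple of $\int_{E_k}\Phi_2(Kf(s))s^\gamma\,ds$; summing over $k\in\mathbb{Z}$ telescopes to $K\int_0^\infty\Phi_2(Kf)s^\gamma\,ds$, which is (M). Reconciling the geometric weights $(2^{k+1})^{(1+\gamma)/\beta}$ and the values of $\alpha$ at the moving points $a_k(2^{k+1})^{-1/\beta}$ against the increments of the right-hand modular is where the precise form of (BK) is used in full. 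Throughout I would first prove everything for $f$ simple, so that $B$ is continuous and the level sets are genuine intervals, and then pass to general nonnegative measurable $f$ by the monotone-convergence property~(4) of the gauges recorded in \Cref{Orlicz classes}, as already noted in the introduction.
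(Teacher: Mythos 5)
Your proposal is sound and, at the level of ideas, it coincides with the paper's proof --- the difference is packaging. The paper handles (G)$\Leftrightarrow$(M) and the $s$-convex case exactly as you do, citing \Cref{main theorem} and \Cref{operator mapping implies norm inequality}. For (M)$\Leftrightarrow$(BK), however, it does not argue directly on $P_p$: it substitutes $g(s)=f(s)s^{\frac{1}{p}-1}$ to recast (M) as a weighted modular inequality for the plain integration operator $(If)(x)=\int_0^x f$, and then quotes Appendix I: \Cref{weak vs weighted weak modular} reduces the strong modular inequality to its weak-type version by partitioning $\mathbb{R_+}$ at points $x_k$ with $If(x_k)=2^k$ and testing against $8f\chi_{[x_{k-2},x_{k-1})}$, and \Cref{conditions for weak vs weighted weak modular} characterizes the weak-type inequality by a $\lambda$-dependent (BK) condition, via Young's inequality for sufficiency and an extremal-profile test for necessity; finally $\lambda$ is scaled away by the change of variable $y=\lambda^p s$. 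Every ingredient of your fused, specialized argument is one of these steps in disguise: your level sets $E_k=\{t: 2^k<B(t)\le 2^{k+1}\}$ are the paper's partition; your offset device (the increment $2^k$ on $E_k$ controlling the term over $E_{k+1}$) is the $f_{k-1}$ trick; and your ``geometric weights'' $(2^{k+1})^{(1+\gamma)p}$ at the moving points $(2^{k+1})^{-p}a_k$ are precisely the homogeneity $\alpha(\lambda,t)=\lambda^{(1+\gamma)p}\alpha(\lambda^{-p}t)$ with $\lambda=2^{k+1}$, i.e., the paper's final change of variables performed inside the decomposition. Your ``hardest step'' does close: with $\sigma=1-\frac{1}{p}+\gamma$, Young's inequality applied on $E_{k-1}$ (increment $2^{k-1}$ against level $2^{k+1}$, so you pay a factor $8$ inside $\Phi_2$) together with the scaled (BK) yields $\alpha(2^{k+1},a_k)\le\int_{E_{k-1}}\Phi_2\bigl(8Kf(s)\bigr)s^{\gamma}ds$, and disjointness of the supports sums these to (M) --- which is literally the computation in the two appendix proofs, merged. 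What the paper's route buys is modularity: the appendix propositions are stated for four general weights $t,u,v,w$ and are reused verbatim for $Q_q$ via duality in the corollary; what yours buys is a self-contained argument in which the role of dilation invariance is visible throughout.

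One caution on your necessity direction: the ``Lagrange'' step conceals real work. The extremal profile $Kf(s)=\phi_2^{-1}(\theta s^{-\sigma})$ need not have finite mass $\int_0^t\phi_2^{-1}(\theta s^{-\sigma})s^{\frac{1}{p}-1}ds$ near $0$, so, as in the proof of \Cref{conditions for weak vs weighted weak modular}, you must exhaust $(0,t)$ by sets $E_n$, choose the normalizing parameter by continuity and the intermediate value theorem, and then pin $\theta\approx\alpha(t)$ by a two-sided comparison using $\Phi_2\bigl(\Psi_2(y)/y\bigr)\le\Psi_2(y)$ together with $\Psi_2(y)/y\le\phi_2^{-1}(y)\le\Psi_2(2y)/y$; this comparison is also what establishes $\alpha(t)<\infty$ before you may divide by it. These points are fillable exactly along the appendix's lines, so they are omissions of detail rather than gaps in strategy.
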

\begin{proof}[Proof of \cref{characterization of $P_p$ and $Q_q$}]

Since $P_p$ commutes with dilations, $(\ref{(1a)})$ and $(\ref{(2a)})$ are equivalent, in view of \cref{gauge norm equivalent modular}. In case $\Phi_1$ and $\Phi_2$ are $s$-convex, \Cref{operator mapping implies norm inequality} ensures that $(\ref{(1a)})$, and hence $(\ref{(2a)})$, is equivalent when $\Phi_2$ is a Young function with complementary function $\Psi_2$.

%We need thus only work with the modular inequalities $\ref{(2)}$ and $\ref{Modular for $Q_q$}$ in the sequel.
%
%\cref{characterization of $P_p$ and $Q_q$} will be seen to be a consequence of the two general results in Appendix I.

The inequality in $(\ref{(2a)})$ reads
\begin{equation*}
\int_{\mathbb{R_+}} \Phi_1 \left( t^{-\frac{1}{p}} \int_0^{t} f(s) s^{\frac{1}{p}-1}ds \right)t^{\gamma}dt \leq \int_{\mathbb{R_+}} \Phi_2 \left( Kf(s) \right)s^{\gamma}ds, \ 0 \leq f \in M(\mathbb{R_+}).
\end{equation*}
Replacing $f(s)s^{\frac{1}{p}-1}$ by $g(s)$, we have
\begin{equation*}
\int_{\mathbb{R_+}} \Phi_1 \left( t^{-\frac{1}{p}} \int_0^{t} g(s)ds \right)t^{\gamma}dt \leq \int_{\mathbb{R_+}} \Phi_2 \left( Kg(s)s^{1-\frac{1}{p}} \right)s^{\gamma}ds, \ 0 \leq f \in M(\mathbb{R_+}).
\end{equation*}
According to \Cref{conditions for weak vs weighted weak modular} (in Appendix I), this latter holds if and only if
\begin{equation*}
\int_0^t \Psi_2 \left( \frac{\alpha(\lambda, t)}{C \lambda y^{1 - \frac{1}{p}+ \gamma}} \right) y^{\gamma} dy \leq \alpha(\lambda, t)= \int_t^{\infty} \Phi_1 \left( \lambda z^{- \frac{1}{p}} \right)z^{\gamma}dz < \infty,
\end{equation*}
with constant $C>0$ independent of $\lambda, t \in \mathbb{R_+}$. Letting $y= \lambda^p s$ and $z= \lambda^p s$ in the above integrals we obtain
\begin{equation*}
\int_0^{\lambda^{-p} t} \Psi_2 \left( \frac{\alpha(\lambda^{-p} t)}{C s^{1 - \frac{1}{p}+ \gamma}} \right) s^{\gamma} ds \leq \alpha(\lambda^{-p} t)= \int_{\lambda^{-p} t}^{\infty} \Phi_1 \left( s^{- \frac{1}{p}} \right)s^{\gamma}ds < \infty,
\end{equation*}
Replacing $\lambda^{-p} t$ by $t$ yields $(\ref{(3a)})$.
\end{proof}

\begin{remark}
We put the (second) condition in $(\ref{(3a)})$ above in a form found useful, in [KP], for certain applications. As a by product we get more precise connections between the indices $p$ and $\gamma$.
{ \renewcommand\labelenumi{(\theenumi)}
\begin{enumerate}
\item $1- \frac{1}{p}+ \gamma =0$. The condition reads
\begin{equation*}
p \phi_2^{-1} \left( { \textstyle\frac{\alpha(t)}{C}  } \right) \leq C t^{-\frac{1}{p}}.
\end{equation*}
\item $1- \frac{1}{p}+ \gamma \neq 0$. We set $y = {\textstyle\frac{\alpha(t)}{s^{1- \frac{1}{p}+ \gamma}}}$ in the integral on the left side of the condition to get, with $\lambda(t)= {\textstyle\frac{\alpha(t)}{t^{1- \frac{1}{p}+ \gamma}}}$
\begin{equation}\label{first integral}
\int_{\lambda(t)}^{\infty} \phi_2^{-1} \left( {\textstyle\frac{y}{C}} \right) \frac{dy}{y^{      \frac{\gamma +1 }{ 1- \frac{1}{p}+ \gamma}         }}   \leq \left( 1- {\textstyle\frac{1}{p}} + \gamma \right) \alpha(t)^{\frac{1}{ (1+ \gamma)p-1}},
\end{equation}
when $1- \frac{1}{p}+ \gamma >0$, and
\begin{equation}\label{second integral}
\int_0^{\lambda(t)} \phi_2^{-1} \left( {\textstyle\frac{y}{C}} \right) \frac{dy}{y^{      \frac{\gamma +1 }{ 1- \frac{1}{p}+ \gamma}         }}   \leq - \left( 1- {\textstyle\frac{1}{p}} + \gamma \right) \alpha(t)^{\frac{1}{ (1+ \gamma)p-1}},
\end{equation}
when $1- \frac{1}{p}+ \gamma <0$.

Observe that for the integral in (\ref{first integral}) to make sense we require $\gamma+1>0$ or $\gamma>-1$.

Again, the change of variable $y=s^{-\frac{1}{p}}$ in the integral giving $\alpha(t)$ yields
\begin{equation}\label{third integral}
\alpha(t)= p \int_0^{t^{-\frac{1}{p}}} \frac{\Phi_1(y)}{y}  \frac{dy}{ y^{(\gamma+1)p}}, \ \ \ \text{when} \ \ \ p>0,
\end{equation}
and
\begin{equation}\label{fourth integral}
\alpha(t)= -p \int_{t^{-\frac{1}{p}}}^{\infty} \frac{\Phi_1(y)}{y}  \frac{dy}{ y^{(\gamma+1)p}}, \ \ \ \text{when} \ \ \ p<0,
\end{equation}

In (\ref{fourth integral}) we need $\gamma+1<0$ or $\gamma<-1$.

Altogether, then, $\ref{(3a)}$ amounts to (\ref{first integral}) with $\alpha(t)$ given by (\ref{third integral}), when $p>0$ and $\gamma > -1 + \frac{1}{p}$ and to (\ref{second integral}) with $\alpha(t)$ given by (\ref{fourth integral}) when $p<0$ and $\gamma < -1 + \frac{1}{p}$.

%Finally, $\ref{(3)}$ reduces to (\ref{second integral}) with $\alpha(t)$ given by (\ref{third integral}) when $p>0$ and $\gamma < -1+ \frac{1}{p}$ and to (\ref{second integral}), with $\alpha(t)$ given by (\ref{fourth integral}) when $p<0$ and $\gamma < -1 + \frac{1}{p}$.

\end{enumerate}
}
\end{remark}

\begin{corollary}
Fix $q, \gamma \in \mathbb{R}, \gamma \neq -1$. Let $Q_q$ be defined as in the introduction. Suppose that $\Phi_1$ and $\Phi_2$ are Young functions having complementary functions $\Psi_1$ and $\Psi_2$, respectively. Then, the following are equivalent: 
\begin{equation}\label{(4)}
\rho_{\Phi_1,t^{\gamma}}^{(1)}(Q_qf) \leq L \, \rho_{\Phi_2,t^{\gamma}}^{(1)}(f), \tag{G}
\end{equation}
$L>0$ being independent of $0 \leq f \in  M(\mathbb{R_+})$;
\begin{equation}\label{(5)}
\int_{\mathbb{R_+}} \Phi_1 \left( |(Q_qf)(t)| \right)t^{\gamma}dt \leq \int_{\mathbb{R_+}} \Phi_2 \left( K|f(s)| \right)s^{\gamma}ds, \tag{M}
\end{equation}
in which $K>0$ is independent of $0 \leq f \in  M(\mathbb{R_+})$;
\begin{equation}\label{operator Qq continuity}
Q_q : L_{\Phi_2,t^{\gamma}}(\mathbb{R_+}) \rightarrow L_{\Phi_1,t^{\gamma}}(\mathbb{R_+}), \tag{I}
\end{equation}
the injection (\ref{operator Qq continuity}) being continuous with respect to the norms $\rho_{\Phi_1}^{(1)}$ and $\rho_{\Phi_2}^{(1)}$;
\begin{equation}\label{(6)}
\int_0^t \Phi_1 \left( \frac{\beta(t)}{C s^{ \frac{1}{q}}} \right) s^{\gamma}ds \leq \beta(t)= \int_t^{\infty} \Psi_2( s^{\frac{1}{q}-1- \gamma}) s^{\gamma}ds < \infty, \tag{BK}
\end{equation}
or 
\begin{equation*}
\int_0^t \phi_1 \left( \frac{\beta(t)}{C s^{\frac{1}{q}}} \right) s^{\gamma - \frac{1}{q}}ds \leq C, \ \ t \in \mathbb{R_+}.
\end{equation*}
\end{corollary}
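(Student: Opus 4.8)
The plan is to deduce the full equivalence from the already-proved characterization of $P_p$ in \Cref{characterization of $P_p$ and $Q_q$}, using duality; the corollary status is the tip-off, since the roles of $\Phi_1$ and $\Psi_2$ in \eqref{(6)} are exactly swapped relative to the $P_p$ condition (BK). First I would dispose of the equivalences $\eqref{(4)}\Leftrightarrow\eqref{(5)}\Leftrightarrow\eqref{operator Qq continuity}$ verbatim as for $P_p$: a one-line change of variables shows $Q_q$ is positively homogeneous and commutes with dilations, so \Cref{gauge norm equivalent modular} gives $\eqref{(4)}\Leftrightarrow\eqref{(5)}$; and since $\Phi_1,\Phi_2$ are Young functions they are $1$-convex, so \Cref{operator mapping implies norm inequality} with $s=1$ (working with the genuine norm $\rho^{(1)}$) together with the trivial converse yields $\eqref{operator Qq continuity}\Leftrightarrow\eqref{(4)}$.

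The substance is the equivalence with \eqref{(6)}, which I would obtain by dualizing. Computing the K\"othe adjoint of $Q_q$ with respect to the pairing $\langle f,g\rangle_\gamma=\int_{\mathbb{R_+}}f(t)g(t)t^\gamma dt$ is a single application of Fubini and gives
\[
(Q_q'g)(s)=s^{\frac1q-1-\gamma}\int_0^s t^{\gamma-\frac1q}g(t)\,dt=(P_pg)(s),\qquad \tfrac1p=1+\gamma-\tfrac1q .
\]
Thus $Q_q'=P_p$ exactly, and the index hypotheses of \Cref{characterization of $P_p$ and $Q_q$} transfer to this $P_p$: $\gamma\neq-1$ is assumed, while $\gamma+1-\frac1p=\frac1q\neq0$. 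Because the $\Phi_i$ are Young functions, each $L_{\Phi_i,t^{\gamma}}(\mathbb{R_+})$ is a Banach function space whose associate space is $L_{\Psi_i,t^{\gamma}}(\mathbb{R_+})$ (the Luxemburg--Orlicz norm duality of [M]). By the duality of positive integral operators between a Banach function space and its associate space, the boundedness \eqref{operator Qq continuity} of $Q_q:L_{\Phi_2,t^{\gamma}}\to L_{\Phi_1,t^{\gamma}}$ is equivalent to the boundedness of $Q_q'=P_p:L_{\Psi_1,t^{\gamma}}\to L_{\Psi_2,t^{\gamma}}$ between the complementary classes.

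I would then read off \eqref{(6)} by applying \Cref{characterization of $P_p$ and $Q_q$} to this last inequality, taking $\Psi_2$ in the role of its ``$\Phi_1$'' and $\Psi_1$ in the role of its ``$\Phi_2$'', whose complementary function is $\Phi_1$. Its condition (BK) then has inner function $\Phi_1$ and $\alpha(t)=\int_t^{\infty}\Psi_2(s^{-1/p})s^{\gamma}ds$; using $1-\frac1p+\gamma=\frac1q$ and $-\frac1p=\frac1q-1-\gamma$ this is precisely
\[
\int_0^t \Phi_1\!\left(\frac{\beta(t)}{C s^{1/q}}\right)s^{\gamma}\,ds\le \beta(t)=\int_t^{\infty}\Psi_2\bigl(s^{\frac1q-1-\gamma}\bigr)s^{\gamma}\,ds,
\]
namely \eqref{(6)}; the second, $\phi_1$-form follows from the same substitution $y=\beta(t)/s^{1/q}$ used in the $P_p$ remark.

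The hard part will be justifying the duality step cleanly. One must verify that the associate space of the Luxemburg-normed class $L_{\Phi_i,t^{\gamma}}$ is indeed $L_{\Psi_i,t^{\gamma}}$, and that for the positive-kernel operators $Q_q$ and $Q_q'=P_p$ the forward norm inequality is genuinely \emph{equivalent} to the adjoint norm inequality over the associate spaces, not merely implied in one direction; one must also confirm the passage from simple functions to the natural domains flagged in the introduction, so that the inequalities hold on all of $L_{\Phi_2,t^{\gamma}}$ and $L_{\Psi_1,t^{\gamma}}$. A purely ``primal'' alternative that sidesteps Orlicz duality would be to reduce \eqref{(5)} directly --- substituting $g(s)=f(s)s^{\frac1q-1}$ to expose the conjugate Hardy operator $\int_t^{\infty}g$, then reflecting $s\mapsto 1/s$ to convert it into $\int_0^{\cdot}$ --- and to invoke the appendix result \Cref{conditions for weak vs weighted weak modular}; this route reproduces an equivalent but differently normalized form of (BK), and matching it to the stated swapped form again requires the conjugate-function identity underlying the duality.
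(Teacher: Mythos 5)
Your proposal is correct and takes essentially the same route as the paper: the paper likewise dispatches \eqref{(4)}$\Leftrightarrow$\eqref{(5)}$\Leftrightarrow$\eqref{operator Qq continuity} by the dilation-commuting argument of \cref{characterization of $P_p$ and $Q_q$}, and obtains \eqref{(6)} by the Principle of Duality for the Banach function norms $\rho^{(1)}_{\Phi_i,t^\gamma}$ (associate norm equivalent to $\rho^{(1)}_{\Psi_i,t^\gamma}$, per [BS]) together with Fubini, which is exactly your K\"othe-adjoint computation giving $\rho_{\Psi_2,t^\gamma}^{(1)}(P_r g)\le K\rho_{\Psi_1,t^\gamma}^{(1)}(g)$ with $\frac1r=1-\frac1q+\gamma$, then applies the $P_p$ characterization with $\Psi_2,\Psi_1$ in the roles of $\Phi_1,\Phi_2$. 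Your index check $\gamma+1-\frac1r=\frac1q\neq0$ and the translation to the stated form of \eqref{(6)} match the paper's, so no gap remains.
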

\begin{proof}
If, for $i=1,2$, $\Phi_i$ is a Young function, then $\rho_{\Phi_i, t^{\gamma}}^{(1)}$ is a Banach function norm with its associate Banach function norm equivalent to $\rho_{\Psi_i, t^{\gamma}}^{(1)}$; see \cite[p. 274-275]{BS}. In particular, The Principle of Duality for such norms ensures that  
\begin{equation*}
\rho_{\Phi_1,t^{\gamma}}^{(1)}(Q_qf) \leq K \rho_{\Phi_2,t^{\gamma}}^{(1)}(f),\ 0 \leq f \in M(\mathbb{R_+}),
\end{equation*}
if and only if for all $ 0 \leq f,g \in M(\mathbb{R_+})$, $\rho_{\Phi_2,s^{\gamma}}^{(1)}(f), \rho_{\Psi_1,t^{\gamma}}^{(1)}(g) \leq 1$, one has
\begin{equation*}
\int_{\mathbb{R_+}} g(t)t^{-\frac{1}{q}} \int_{t}^{\infty} f(s)s^{\frac{1}{q}-1}ds \ t^{\gamma}dt < \infty.
\end{equation*}
But, this means
\begin{align*}
\infty > \int_{\mathbb{R_+}} g(t)t^{-\frac{1}{q}} \int_{t}^{\infty} f(s)s^{\frac{1}{q}-1}ds \ t^{\gamma}dt &= \int_{\mathbb{R_+}} f(s)s^{\frac{1}{q}-1} \int_0^s g(t)t^{-\frac{1}{q}+ \gamma} dt ds,
\end{align*}
for all $0 \leq f,g \in M(\mathbb{R_+})$, with $ \rho_{\Psi_1,t^{\gamma}}^{(1)}(g),\rho_{\Phi_2,s^{\gamma}}^{(1)}(f) \leq 1$, that is
\begin{equation*}
\rho_{\Psi_{2},t^{\gamma}}^{(1)}(P_{r}g) \leq K \rho_{\Psi_{1},t^{\gamma}}^{(1)}(g), \ \ 0 \leq g \in M(\mathbb{R_+}),
\end{equation*}
where $\frac{1}{r}= 1- \frac{1}{q}+ \gamma$.

The  argument for $P_p$ in \cref{characterization of $P_p$ and $Q_q$} shows this last inequality holds if and only if the condition in $(\ref{(6)})$ does.

Moreover, since $Q_q$ is a dilation commuting linear operator the same considerations as in the proof of \cref{characterization of $P_p$ and $Q_q$}, shows $(\ref{(4)})$, $(\ref{(5)})$ and $(\ref{operator Qq continuity})$ are equivalent.

This completes the proof.
\end{proof}

\section{ \texorpdfstring {The Hardy-Littlewood maximal operator $M$}{}}\label{Maximal function}

\begin{untheorem}\label{characterization for M}
Fix $\gamma >-1$. Let $\Phi(t)=\int_0^t \phi(s)ds$ be a Young function with complementary function $\Psi$. Then, the following assertions are equivalent:

\begin{equation}\label{(M1)}
\rho_{\Phi,|x|^{\gamma}}^{(1)}(Mf) \leq L \rho_{\Phi, |x|^{\gamma}}^{(1)}(f), \tag{G}
\end{equation}
where $L>0$, is independent of $f \in M(\mathbb{R})$;
\begin{equation}\label{operator continuity of M}
M : L_{\Phi, |x|^{\gamma}}(\mathbb{R}) \rightarrow L_{\Phi, |x|^{\gamma}}(\mathbb{R}); \tag{I}
\end{equation}
\begin{equation}\label{(M2)}
\int_{\mathbb{R}} \Phi \left( (Mf)(x) \right) |x|^{\gamma}dx \leq \int_{\mathbb{R}} \Phi(K |f(y)|)|y|^{\gamma}dy, \tag{M}
\end{equation}
in which $K>0$ doesn't depend on $f \in M(\mathbb{R})$;
\begin{equation}\label{(M3)}
\Psi(2t) \leq C \Psi(t) \tag{BK}
\end{equation}
and either $-1 < \gamma < 0$ or
\begin{equation}\label{our condition for maximal function}
\gamma \geq 0 \ \ \ \text{and} \ \ \   \frac{1}{t} \int_0^t \phi^{-1}(s^{-\gamma})ds \leq C \phi^{-1}(Ct^{-\gamma}),
\end{equation}
for some $C \geq 1$ independent of $t \in \mathbb{R_+}$.
\end{untheorem}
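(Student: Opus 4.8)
The plan is to separate the three analytic assertions from the concrete condition \eqref{(M3)}, and to treat them in that order. First I would note that $M$ is positively homogeneous and dilation-commuting, so the $\mathbb{R}$-variant of \cref{gauge norm equivalent modular} (cf.\ its footnote) gives $\eqref{(M1)}\Leftrightarrow\eqref{(M2)}$ immediately. Since $\Phi$ is a Young function, $\rho^{(1)}_{\Phi,|x|^{\gamma}}$ is a genuine norm. For $\eqref{operator continuity of M}\Rightarrow\eqref{(M1)}$ I would argue as in \cref{operator mapping implies norm inequality}, but directly from continuity at the origin: choosing $\delta$ with $\rho^{(1)}(f)<\delta\Rightarrow\rho^{(1)}(Mf)<1$ and invoking the degree-one homogeneity of both $M$ and $\rho^{(1)}$ yields $\rho^{(1)}(Mf)\le\delta^{-1}\rho^{(1)}(f)$; here linearity, which $M$ lacks, is not needed. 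For the reverse $\eqref{(M1)}\Rightarrow\eqref{operator continuity of M}$ I would use the sublinearity estimate $|Mf-Mg|\le M(f-g)$ together with the monotonicity of $\rho^{(1)}$ to obtain $\rho^{(1)}(Mf-Mg)\le L\,\rho^{(1)}(f-g)$, so that $M$ is Lipschitz, hence continuous.

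The substance is then $\eqref{(M2)}\Leftrightarrow\eqref{(M3)}$, which I would obtain by reducing $M$ to the Hardy operators of \Cref{operators Pp and Qq}. Since $|x|^{\gamma}$ is even and $Mf=M|f|$, I would symmetrise and work on $(0,\infty)$. For $x>0$ I would split the intervals $I=(a,b)\ni x$ into a \emph{local} family, on which $a\ge x/2$ (so that $|y|^{\gamma}\asymp|x|^{\gamma}$ throughout $I$), and a \emph{global} family reaching toward the origin or far to the right. On the local family $M$ acts against an essentially constant weight and is pointwise comparable to the unweighted maximal operator, whereas the global family is dominated by the Hardy averaging operator $P_1f(t)=t^{-1}\int_0^t f$ and, after reflection, by its conjugate $Q_1$. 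Thus I expect $\eqref{(M2)}$ to be equivalent to the conjunction of the unweighted maximal modular inequality and the weighted modular inequalities for $P_1$ and $Q_1$.

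To the $P_1$ and $Q_1$ pieces I would apply \cref{characterization of $P_p$ and $Q_q$} and its corollary with $p=q=1$, so that $1-\tfrac1p+\gamma=\gamma$ and the Hardy condition \eqref{(3a)} collapses---after the reformulation carried out in the Remark following \cref{characterization of $P_p$ and $Q_q$}---to the displayed requirement $\tfrac1t\int_0^t\phi^{-1}(s^{-\gamma})\,ds\le C\phi^{-1}(Ct^{-\gamma})$ of \eqref{our condition for maximal function}. The scale-invariant local part reproduces the classical criterion that $M$ be bounded on the unweighted Orlicz space $L_{\Phi}$, namely $\Phi\in\nabla_2$, equivalently $\Psi\in\Delta_2$, i.e. $\Psi(2t)\le C\Psi(t)$. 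The dichotomy in $\gamma$ then records precisely when the Hardy (origin) condition is already subsumed by $\Psi\in\Delta_2$---this is the range $-1<\gamma<0$---and when it becomes an independent constraint---this is $\gamma\ge0$, where one checks for consistency that at $\gamma=0$ the condition reads $\phi^{-1}(1)\le C\phi^{-1}(C)$ and so holds automatically, recovering the classical unweighted result.

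The main obstacle I anticipate is the pointwise reduction of the uncentered maximal operator over all of $\mathbb{R}$ to the one-dimensional Hardy operators: one must control intervals that straddle the origin and show that the local and global contributions genuinely decouple into the three modular inequalities, with constants independent of scale. A secondary, and classically delicate, point is the \emph{necessity} of $\Psi(2t)\le C\Psi(t)$, which I would establish by a lacunary test-function argument producing functions whose maximal functions have controlled overlap and thereby force the $\Delta_2$ growth of $\Psi$; the necessity of \eqref{our condition for maximal function} in the range $\gamma\ge0$ should then follow by testing \eqref{(M2)} against the same extremal profiles built from $\phi^{-1}(s^{-\gamma})$ that arise in the treatment of $P_1$.
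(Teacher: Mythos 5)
Your handling of the equivalence of \eqref{(M1)}, \eqref{operator continuity of M} and \eqref{(M2)} agrees with the paper, which appeals to the $\mathbb{R}$-variant of \cref{gauge norm equivalent modular}; your continuity-at-zero argument using degree-one homogeneity, and the Lipschitz estimate from $|Mf-Mg|\le M(f-g)$, are in fact a welcome repair, since \Cref{operator mapping implies norm inequality} is stated for \emph{linear} $T$ and $M$ is only sublinear. The divergence, and the gap, is in the core equivalence \eqref{(M2)}\,$\Leftrightarrow$\,(BK). The paper does not decompose $M$ into Hardy operators at all: it quotes Theorem 1 of \cite{BK1}, which characterizes \eqref{(M2)} by $\Psi\in\Delta_2$ together with the single interval-testing condition \eqref{BK1 condition}, and then reduces \eqref{BK1 condition} for $w(x)=|x|^{\gamma}$ by elementary case analysis on $I=[a,b]$: automatic when $-1<\gamma<0$, or when $\gamma\ge0$ and $a>b/2$; the case $\gamma\ge 0$, $0\le a\le b/2$ reduces to $a=0$ and, after the substitutions $x=by$ and $t=\phi(\lambda)^{-1/\gamma}$, yields \eqref{our condition for maximal function}; intervals straddling the origin reduce to $[0,d]$, $d=\max[|a|,b]$.

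Your proposed reduction to $P_1$, $Q_1$ and the unweighted maximal inequality has three concrete defects. (i) The conjugate piece is misidentified: the averages over intervals $(a,b)\ni x$ with $b\gg x$ force the operator $\tilde Qf(t)=\int_t^\infty f(s)\,\frac{ds}{s}$, i.e.\ $Q_q$ with $\frac1q=0$ (outside the stated range of the corollary), not $Q_1f(t)=t^{-1}\int_t^\infty f$. This is not cosmetic: already for $\Phi(t)=t^p$, $Q_1$ is bounded on $L_p(t^{\gamma})$ precisely when $\gamma>p-1$, while $M$ is bounded precisely when $-1<\gamma<p-1$; so a conjunction containing a $Q_1$ modular inequality is strictly stronger than \eqref{(M2)} and the claimed equivalence is false as stated. (ii) Even with the correct operator $\tilde Q$, the necessity half of your decoupling cannot be run by domination: $\tilde Qf\not\lesssim Mf$ pointwise (in the dyadic sum $\int_x^\infty f\,\frac{ds}{s}=\sum_k\int_{2^kx}^{2^{k+1}x}f\,\frac{ds}{s}$, each term is $\lesssim Mf(x)$ but there are unboundedly many such terms), and you offer no substitute argument extracting the $\tilde Q$ modular inequality from \eqref{(M2)}. (iii) The assertion that the condition \eqref{(3a)} of \cref{characterization of $P_p$ and $Q_q$} at $p=1$ ``collapses'' to \eqref{our condition for maximal function} is unproved and not routine: \eqref{(3a)} involves $\alpha(t)=\int_t^\infty\Phi(s^{-1})s^{\gamma}\,ds$ --- whose finiteness is itself a constraint --- tested inside a $\Psi$-integral, while \eqref{our condition for maximal function} is a pure $\phi^{-1}$-condition; the reformulations in the Remark after \cref{characterization of $P_p$ and $Q_q$} still carry $\alpha(t)$ and do not effect this translation. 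Indeed, Appendix II of the paper is devoted to showing that the two families of integral conditions in play ($A_{\phi}$-type and BK-type) are \emph{inequivalent} for general Young functions, so any such identification of conditions demands a proof. These gaps sit at the heart of \eqref{(M2)}\,$\Rightarrow$\,(BK) and (BK)\,$\Rightarrow$\,\eqref{(M2)}; the clean route is the paper's, through the interval condition of \cite{BK1}.
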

\begin{proof}[Proof of \cref{characterization for M}]
In view of \cref{gauge norm equivalent modular}, $\ref{(M1)}$, $\ref{operator continuity of M}$ and $\ref{(M2)}$ are equivalent . Again, by a special case of Theorem 1 in \cite{BK1}, $\ref{(M2)}$ holds if and only if
\begin{equation*}
\Psi(2t) \leq C \Psi(t), \ \ \ t \in \mathbb{R_+}, 
\end{equation*}
and
\begin{equation}\label{BK1 condition}
\frac{1}{\mu_{\gamma}(I)} \int_I \Psi \left( \frac{1}{C}   \frac{\Phi(\lambda)}{\lambda} \frac{\mu_{\gamma}(I) }{|I||x|^{\gamma}}  \right) |x|^{\gamma} dx \leq \Phi(\lambda)
\end{equation}
with $C \geq 1$ independent of $I \subset \mathbb{R}$ and $\lambda \in  \mathbb{R_+}$; here 
\begin{equation*}
\mu_{\gamma}(I)= \int_I |x|^{\gamma}dx .
\end{equation*}
Since
\begin{equation*}
\textstyle\frac{t}{2} \phi^{-1}({\textstyle\frac{t}{2}}) \leq \Psi(t) \leq t \phi^{-1}(t), \ t \in \mathbb{R_+},
\end{equation*}
(\ref{BK1 condition}) is equivalent to
\begin{equation*}
\frac{1}{|I|} \int_I \phi^{-1} \left( \frac{1}{C}   \phi(\lambda) \frac{\mu_{\gamma}(I) }{|I|}   \frac{1}{|x|^{\gamma}}\right) dx \leq C \lambda,
\end{equation*}
in which $C \geq 1$ does not depend on $I \subset \mathbb{R}$ or $\lambda \in  \mathbb{R_+}$.

We observe that the assumption $\gamma>-1$ was necessary in order that $\mu_{\gamma}(I)$ be finite for all intervals $I \subset \mathbb{R_+}$. Further, one readily shows that for $I=[a,b],$
\begin{equation*}
\frac{\mu_{\gamma}(I)}{|I|} \approx \max [|a|,|b|]^{\gamma}.
\end{equation*}
Assume, first that $ab \geq 0$, say $0 \leq a < b$. Then (\ref{BK1 condition}) amounts to
\begin{equation*}
\frac{1}{b-a} \int_a^b \phi^{-1} \left(    \phi(\lambda) \frac{1}{C}  \left( \frac{b}{x} \right)^{\gamma}\right)dx \leq C \lambda.
\end{equation*}
when $-1 < \gamma < 0$, which automatically holds with $C=1$, since $\frac{1}{C}  \left( \frac{b}{x} \right)^{\gamma} \leq 1$. The same is true when $\gamma \geq 0$ and $a > \frac{b}{2}$ with $C= 2^{\gamma}$.

Suppose, then, $\gamma \geq 0$ and $0 \leq a \leq \frac{b}{2}$. It suffices to take $a=0$, so that we are looking at
\begin{equation*}
\frac{1}{b} \int_0^b \phi^{-1} \left(    \phi(\lambda) \frac{1}{C}  \left( \frac{b}{x} \right)^{\gamma}\right)dx \leq C \lambda,
\end{equation*}
or, setting $x=by$, at
\begin{equation*}
 \int_0^1 \phi^{-1} \left(    \phi(\lambda) \frac{1}{C}  y^{-\gamma}\right)dy \leq C \lambda.
\end{equation*}
Let $s = \phi(\lambda)^{-\frac{1}{\gamma}}y$ to get
\begin{equation*}
 \int_0^{\phi(\lambda)^{-\frac{1}{\gamma}}} \phi^{-1} \left( \frac{s^{-\gamma}}{C}  \right)  \phi(\lambda)^{\gamma}ds \leq C \lambda=C \phi^{-1} \left( \left( \phi(\lambda)^{-\frac{1}{\gamma}} \right)^{-\gamma} \right).
\end{equation*}
Taking $t= \phi(\lambda)^{-\frac{1}{\gamma}}$, we arrive at (\ref{our condition for maximal function}).

Finally, consider $\gamma \geq 0$ and $a < 0 < b$. In that case, (\ref{BK1 condition}) reduces to
\begin{equation*}
\frac{1}{d} \int_0^d \phi^{-1} \left(    \phi(\lambda) \frac{1}{C}  \left( \frac{d}{x} \right)^{\gamma}\right)dx \leq C \lambda,
\end{equation*}
$d= \max[|a|,b]$, namely, to the previous case.
\end{proof}

\section{ \texorpdfstring {The Hilbert transform $H$}{}}\label{The Hilbert transform}
\begin{untheorem}\label{characterization for H}
Fix $\gamma >-1$. Let $\Phi(t)=\int_0^t \phi(s)ds$ be a Young function with complementary function $\Psi$. Then, the following assertions concerning the Hilbert transform, $H$, are equivalent:
\begin{equation}\label{(H1)}
\rho_{\Phi,|x|^{\gamma}}^{(1)}(Hf) \leq K \rho_{\Phi, |x|^{\gamma}}^{(1)}(f), \tag{G}
\end{equation}
where $K>0$, is independent of $f \in L_{\Phi} \left( |x|^{\gamma} \right)$;
\begin{equation}\label{continuity of Hilbert transform H}
H : L_{\Phi, |x|^{\gamma}}(\mathbb{R}) \rightarrow L_{\Phi, |x|^{\gamma}}(\mathbb{R}); \tag{I}
\end{equation} 
\begin{equation}\label{(H2)} 
\int_{\mathbb{R}} \Phi \left( |(Hf)(x)| \right) |x|^{\gamma}dx \leq \int_{\mathbb{R}} \Phi(K |f(y)|)|y|^{\gamma}dy < \infty, \tag{M}
\end{equation}
in which $K>0$ doesn't depend on $f \in L_{1} \left( \frac{1}{1+|x|} \right)$;\\
%\label{(H3)}
\emph{(BK)} There exists $C>0$ such that, for all $t \in \mathbb{R_+}$,
\begin{equation}\label{Delta two for H}
\Phi(2t) \leq C\Phi(t),
\end{equation}
\begin{equation}\label{Delta two complement for H}
\Psi(2t) \leq C \Psi(t)
\end{equation}
and
\begin{equation}\label{our condition for Hilbert transform}
\frac{1}{t} \int_0^t \phi^{-1}(s^{-\gamma})ds \leq  \phi^{-1}(Ct^{-\gamma}).
\end{equation}
\end{untheorem}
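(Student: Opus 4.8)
The plan is to separate the ``soft'' functional-analytic equivalences from the single hard analytic step, exactly as in the proofs of \cref{characterization of $P_p$ and $Q_q$} and \cref{characterization for M}. First I would record that $H$ is positively homogeneous and commutes with dilations: the substitution $z=\lambda y$ in the principal-value integral gives $H\bigl(f(\lambda\,\cdot)\bigr)(x)=(Hf)(\lambda x)$ for $\lambda>0$, and $H(cf)=cHf$. Hence the $\mathbb{R}$-variant of \cref{gauge norm equivalent modular} (the footnoted version with $t^{\gamma}$ replaced by $|t|^{\gamma}$) applies and yields the equivalence of \eqref{(H1)} and \eqref{(H2)}. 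Since $\Phi$ is a Young function it is convex, hence $1$-convex, so $\rho^{(1)}_{\Phi,|x|^{\gamma}}$ is an honest norm and \Cref{operator mapping implies norm inequality} (with $s=1$) gives \eqref{continuity of Hilbert transform H} $\Leftrightarrow$ \eqref{(H1)}. The passage from simple $f$ to the natural domain $L_1\bigl(1/(1+|x|)\bigr)$ is the extension remarked upon in the introduction. This disposes of three of the four equivalences and leaves only \eqref{(H2)} $\Leftrightarrow$ (BK).

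For the remaining equivalence I would invoke the Hilbert-transform analogue of the maximal-function characterization used in \cref{characterization for M} (the companion, in \cite{BK1}, of the result applied there to $M$), asserting that the weighted modular inequality \eqref{(H2)} holds if and only if $\Phi$ and its complement $\Psi$ both satisfy the doubling conditions \eqref{Delta two for H}, \eqref{Delta two complement for H}, together with a weight condition of the same shape as \eqref{BK1 condition}. The appearance of the doubling of $\Phi$ \emph{itself}, and not merely of $\Psi$, is the structural feature that distinguishes $H$ from the positive operator $M$: because $H$ is a genuine singular integral, taking values of both signs, two-sided control is needed, and one cannot reduce to $0\le f$ as was done for $P_p$, $Q_q$ and $M$.

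Granting that characterization, the reduction of the weight condition to \eqref{our condition for Hilbert transform} is then the same computation carried out in \cref{characterization for M}: using $\tfrac{t}{2}\phi^{-1}(t/2)\le\Psi(t)\le t\,\phi^{-1}(t)$ and $\mu_{\gamma}(I)/|I|\approx\max[|a|,|b|]^{\gamma}$ for $I=[a,b]$, the condition on intervals with $ab\ge0$ or $a<0<b$ collapses to the single-interval inequality $\tfrac1b\int_0^b\phi^{-1}\!\bigl(\phi(\lambda)C^{-1}(b/x)^{\gamma}\bigr)\,dx\le C\lambda$, and the substitutions $x=by$, $s=\phi(\lambda)^{-1/\gamma}y$, $t=\phi(\lambda)^{-1/\gamma}$ turn this into $\tfrac1t\int_0^t\phi^{-1}(s^{-\gamma})\,ds\le C\,\phi^{-1}(Ct^{-\gamma})$. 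Here the extra doubling \eqref{Delta two for H} does real work: $\phi(2t)\le C\phi(t)$ forces $\phi^{-1}(Cu)\ge2\phi^{-1}(u)$, so the outer constant may be absorbed into the argument and the condition takes the cleaner form \eqref{our condition for Hilbert transform}. I would also note that when $-1<\gamma<0$ the map $s\mapsto s^{-\gamma}$ is increasing, so the inequality holds automatically with $C=1$; this is why, unlike in \cref{characterization for M}, (BK) states the weight condition uniformly in $\gamma>-1$ with no case split.

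The main obstacle is precisely the analytic characterization invoked in the second paragraph: the necessity and sufficiency of the two $\Delta_2$ conditions and the $A_p$-type weight condition for the modular inequality \eqref{(H2)}. The sufficiency direction is the hard half and rests on a Calder\'on--Zygmund / good-$\lambda$ comparison of $H$ with $M$ in the weighted Orlicz modular, the doubling \eqref{Delta two for H} being consumed there; the necessity direction proceeds by testing \eqref{(H2)} against functions supported on intervals and sign-adapted to the kernel $1/(x-y)$ so as to recover both doubling conditions and the weight condition. Everything downstream of this input is bookkeeping identical to the $M$ case, so the genuine content of the theorem is the transfer of that singular-integral characterization into the present dilation-invariant, power-weight setting.
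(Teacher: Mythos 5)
Your soft equivalences (the $|x|^{\gamma}$-on-$\mathbb{R}$ variant of \cref{gauge norm equivalent modular}, plus \Cref{operator mapping implies norm inequality} with $s=1$) and your final computational reduction of the interval condition to \eqref{our condition for Hilbert transform} do match the paper. The genuine gap is in your second paragraph: the characterization you ``invoke'' --- that \eqref{(H2)} holds if and only if \eqref{Delta two for H}, \eqref{Delta two complement for H} and a weight condition of the shape \eqref{BK1 condition} hold --- does not exist as a citable companion result; \cite{BK1} treats only the maximal operator, and this equivalence for $H$ is precisely what the theorem must establish. Your closing paragraph concedes the obstacle, but the sketch there does not close it. On the necessity side, ``sign-adapted testing'' alone does not yield \eqref{Delta two complement for H}: the paper obtains the doubling of $\Psi$ by first passing, via Corollary 2.7 of \cite{BK2}, to the dual modular inequality $\int_{\mathbb{R}}\Psi\left(|x|^{-\gamma}|(Hf)(x)|\right)|x|^{\gamma}dx \leq \int_{\mathbb{R}}\Psi\left(K|y|^{-\gamma}|f(y)|\right)|y|^{\gamma}dy$ and only then testing --- a duality step entirely absent from your outline. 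The other two necessities also require concrete pointwise lower bounds for $H$ that you would have to supply: $|(Hf_m)(y)| \geq \frac{1}{\pi}2^{-m-1}r_0|x-y|^{-1}$ for $f_m=\chi_{B_m}$, $y \notin B_m$ (replacing the $M$-estimate in Theorem 7 of \cite{BK1}, and giving \eqref{Delta two for H}), and $|(Hf)(x)| \geq \frac{1}{2\pi}\,\rho_{\Psi,|x|^{\gamma}}(\chi_I/|\cdot|^{\gamma})\,\chi_J(x)/|I|$ on the translated interval $J=I+|I|$, combined with the symmetric estimate swapping $I$ and $J$ and the generalized H\"older inequality, to recover $A_{\phi}$ as in \cite[p.~280]{KT}.

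On the sufficiency side your good-$\lambda$ plan is the right one, but it is missing its hinge: the Coifman--Fefferman comparison of $H^*$ with $M$ cannot be run directly from the $A_{\phi}$-type condition. The paper first upgrades $A_{\phi}$ together with the two doubling conditions to the $A_{\infty}$ property of $|x|^{\gamma}$ (Theorem 7 of \cite{KT}), then applies the argument of \cite[p.~245]{CF}, absorbs the $H^*$ term using \eqref{Delta two for H}, and concludes by applying \cref{characterization for M} to the resulting modular bound of $H^*f$ by $Mf$ --- the last step needing the observation that \eqref{our condition for Hilbert transform} implies \eqref{our condition for maximal function}. None of this is routine bookkeeping identical to the $M$ case, as you assert; it is the actual content of the theorem, and as written your proposal assumes that content rather than proving it.
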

The condition (\ref{our condition for Hilbert transform}) clearly holds if $\gamma \leq 0$. As for $\gamma >0$, the following lemma shows (\ref{our condition for Hilbert transform}) amounts to the condition $A_{\phi}$ in \cite{KT}, provided one has  (\ref{Delta two for H}).

\begin{lemma}
Fix $\gamma >0$ and let $\Phi(t)=\int_0^t \phi(s)ds$ be a Young function. Then, $A_{\phi}$ holds if and only if
\begin{equation}\label{New Aphi condition by Kerman}
\frac{ \int_0^{t_1} \phi^{-1}(s^{-\gamma})ds    + \int_0^{t_2} \phi^{-1}(s^{-\gamma})ds    }{t_1 + t_2} \leq \phi^{-1}(Ct_2^{-\gamma}),
\end{equation}
for some $C>1$ independent of $0 \leq t_1 < t_2$.

If further, one has (\ref{Delta two for H}), then (\ref{New Aphi condition by Kerman})can be replaced by (\ref{our condition for Hilbert transform}).
\end{lemma}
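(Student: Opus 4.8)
The plan is to prove the two assertions in turn: first the equivalence of $A_\phi$ with (\ref{New Aphi condition by Kerman}), and then, under the doubling hypothesis (\ref{Delta two for H}), the reduction of (\ref{New Aphi condition by Kerman}) to (\ref{our condition for Hilbert transform}). I would begin by writing out the condition $A_\phi$ of \cite{KT} specialized to the power weight $w(x)=|x|^\gamma$. Exactly as in the treatment of the maximal function, the quantity attached to an interval $I=[a,b]$ is the mean weight $\frac{\mu_\gamma(I)}{|I|}\approx \max[|a|,|b|]^\gamma$, and the specialized $A_\phi$ compares the mean $\frac{1}{|I|}\int_I \phi^{-1}(|x|^{-\gamma})\,dx$ with $\phi^{-1}$ evaluated at a fixed multiple of the reciprocal mean weight $\bigl(\tfrac{\mu_\gamma(I)}{|I|}\bigr)^{-1}$.

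The core of the first equivalence is to show that it suffices to test this inequality on intervals straddling the origin, $I=[-t_1,t_2]$ with $0\le t_1<t_2$. This is the same reduction carried out in the proof of \cref{characterization for M}: since $\gamma>0$, the weight $|x|^\gamma$ is smallest near $0$, so $\phi^{-1}(|x|^{-\gamma})$ is largest there and the left-hand mean is maximized by intervals containing the origin, while intervals bounded away from $0$ (where $|x|^\gamma$ is comparable to a constant across $I$) satisfy the inequality automatically, with a constant depending only on $\gamma$, by the monotonicity of $\phi^{-1}$. On a straddling interval the evenness of the weight splits the integral as $\int_{-t_1}^{t_2}\phi^{-1}(|x|^{-\gamma})\,dx=\int_0^{t_1}\phi^{-1}(s^{-\gamma})\,ds+\int_0^{t_2}\phi^{-1}(s^{-\gamma})\,ds$, with $|I|=t_1+t_2$ and $\frac{\mu_\gamma(I)}{|I|}\approx t_2^\gamma$; substituting produces precisely (\ref{New Aphi condition by Kerman}). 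Conversely, (\ref{New Aphi condition by Kerman}) tested over all $0\le t_1<t_2$ recovers $A_\phi$ on straddling intervals directly and on the remaining intervals after the same reduction.

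For the second equivalence, taking $t_1=0$ in (\ref{New Aphi condition by Kerman}) yields (\ref{our condition for Hilbert transform}) at once, with no extra hypothesis. For the reverse implication, given (\ref{our condition for Hilbert transform}) and $0\le t_1<t_2$, the monotonicity of the integral together with $t_1+t_2\ge t_2$ gives
\[
\frac{\int_0^{t_1}\phi^{-1}(s^{-\gamma})\,ds+\int_0^{t_2}\phi^{-1}(s^{-\gamma})\,ds}{t_1+t_2}\le \frac{2}{t_2}\int_0^{t_2}\phi^{-1}(s^{-\gamma})\,ds\le 2\,\phi^{-1}\!\left(Ct_2^{-\gamma}\right).
\]
Here the hypothesis (\ref{Delta two for H}) enters: $\Phi(2t)\le C\Phi(t)$ forces $\phi$ itself to be doubling, $\phi(2t)\le C'\phi(t)$ (from $\Phi(t)\approx t\phi(t)$ and $2t\,\phi(2t)\le \Phi(4t)\le C^2\Phi(t)$), whence $2\phi^{-1}(u)\le \phi^{-1}(C'u)$ for all $u$. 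Applying this absorbs the factor $2$ into the argument of $\phi^{-1}$, turning the right-hand side into $\phi^{-1}(C''t_2^{-\gamma})$ and giving (\ref{New Aphi condition by Kerman}) with a larger constant.

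The main obstacle is the reduction step in the first equivalence: verifying rigorously that the straddling intervals are extremal, i.e.\ that the non-straddling intervals contribute nothing beyond a $\gamma$-dependent constant. This demands the same careful case analysis ($a\ge b/2$ versus $0\le a<b/2$ versus $a<0<b$) used for \cref{characterization for M}, combined with the comparability $\frac{\mu_\gamma(I)}{|I|}\approx \max[|a|,|b|]^\gamma$. The remaining computations, including the elementary passage from the doubling of $\Phi$ to that of $\phi$, are routine.
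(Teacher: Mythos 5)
Your overall route coincides with the paper's: specialize $A_\phi$ to $w(x)=|x|^{\gamma}$, observe that on a straddling interval $I=[-t_1,t_2]$ the condition is verbatim (\ref{New Aphi condition by Kerman}), handle intervals with $ab\geq 0$ by monotonicity, and for the second assertion take $t_1=0$ in one direction and use the bound $\frac{2}{t_2}\int_0^{t_2}\phi^{-1}(s^{-\gamma})ds$ plus absorption of the factor $2$ via doubling of $\phi$ in the other; your elementary derivation of $\phi(2t)\leq C'\phi(t)$ from (\ref{Delta two for H}) correctly fills in the step the paper delegates to \cite{KR}. However, two genuine gaps remain in the first equivalence. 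First, the condition $A_\phi$ of \cite{KT} --- see (\ref{Aphi for gamma power weight}) --- carries a parameter $\epsilon>0$ that your write-up silently drops; since (\ref{New Aphi condition by Kerman}) is $\epsilon$-free, your argument at best yields the $\epsilon=1$ slice of $A_\phi$, and the direction ``(\ref{New Aphi condition by Kerman}) implies $A_\phi$'' is incomplete without the paper's rescaling step: the change of variable $y=\epsilon^{1/\gamma}x$ converts the condition for the pair $(\epsilon, I)$ into the condition for $(1, I_\epsilon)$ with $I_\epsilon=[\epsilon^{1/\gamma}a,\epsilon^{1/\gamma}b]$, so that testing at $\epsilon=1$ over all intervals suffices. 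This uses precisely the homogeneity of the power weight, and it is where $\gamma>0$ enters; it must be said.

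Second, you defer the crux --- that straddling intervals are extremal --- to ``the same careful case analysis'' as in \cref{characterization for M}, but the reduction that analysis suggests (replace $\frac{1}{b-a}$ by $\frac{2}{b}$ when $0\leq a\leq b/2$) loses a factor of $2$ that lands \emph{inside} $\phi$; in the relevant normalization $\phi(\mathrm{avg})\leq C\max[|a|,|b|]^{-\gamma}$ such a factor cannot be absorbed, because the first assertion of the lemma assumes no doubling of $\Phi$ or $\phi$ (in \cref{characterization for M} the constant sits both inside $\phi^{-1}$ and outside, so the loss is harmless there, and the Appendix~II example shows these conditions are substantive for non-doubling $\phi$). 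The paper's proof avoids the loss with an exact inequality: since $\phi^{-1}(s^{-\gamma})$ is decreasing, one has $\frac{1}{b-a}\int_a^b\phi^{-1}(s^{-\gamma})ds\leq\frac{1}{b}\int_0^b\phi^{-1}(s^{-\gamma})ds$ for \emph{all} $0\leq a<b$ --- no case split and no constant --- after which the instance $t_1=0$, $t_2=b$ of (\ref{New Aphi condition by Kerman}) gives $A_\phi$ on $[a,b]$. Relatedly, your claim that intervals ``bounded away from $0$'' satisfy the inequality automatically is correct only when the weight is comparable to a constant across $I$ (say $a\geq b/2$); for $0\leq a<b/2$ the inequality is exactly the substantive condition (\ref{our condition for Hilbert transform}), not an automatic one. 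With these two repairs your proposal becomes the paper's proof.
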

\begin{proof}
The condition $A_{\phi}$ for the weight $w(x) = |x|^{\gamma}$ reads
\begin{equation}\label{Aphi for gamma power weight}
\frac{\epsilon   \mu_{\gamma}(I)}{|I|}  \phi  \left( \frac{1}{|I|} \int_I \phi^{-1} \left(   \frac{1 }{\epsilon |x|^{\gamma}}  \right) dx \right)     \leq C,
\end{equation}
for all intervals $I \subset \mathbb{R}$ and $\epsilon >0$. Given $I \, {=} \, [a,b]$, the change of variable $y    \, {=} \,  \epsilon^{\frac{1}{\gamma}}x$ in the integrals  $\int_{I} \epsilon  |x|^{\gamma}dx$ and $\int_I \phi^{-1} \left(   \frac{1 }{\epsilon |x|^{\gamma}}  \right) dx $ yields $ \epsilon^{-\frac{1}{\gamma}} \int_{I_{\epsilon}} |y|^{\gamma}dy$ and $\epsilon^{-\frac{1}{\gamma}} \int_{I_{\epsilon}} \phi^{-1} \left(   \frac{1 }{|y|^{\gamma}}  \right) dy$, respectively, where $I_{\epsilon}  \, {=} \,   [   \epsilon^{\frac{1}{\gamma}} a,  \epsilon^{\frac{1}{\gamma}}b  ]$. So $A_{\phi}$ becomes

\begin{equation*}
\frac{   \mu_{\gamma}(I_{\epsilon})}{|I_{\epsilon}|}  \phi  \left( \frac{1}{|I_{\epsilon}|} \int_{I_{\epsilon}} \phi^{-1} \left(   \frac{1 }{|y|^{\gamma}}  \right) dy \right)     \leq C,
\end{equation*}
Since $I_{\epsilon}$ is arbitrary whenever $I$ is, it suffices to verify $A_{\phi}$ with $\epsilon \, {=} \, 1$. As in the proof of \Cref{characterization of $P_p$ and $Q_q$}
\begin{equation*}
\frac{\mu_{\gamma}(I)}{|I|} \approx \max [|a|, |b|]^{\gamma},
\end{equation*}
whence, $A_{\phi}$ asserts
\begin{equation*}
 \phi  \left( \frac{1}{b-a} \int_{a}^{b} \phi^{-1} \left(   \frac{1 }{|y|^{\gamma}}  \right) dy \right)     \leq \frac{C}{ \max [|a|, |b|]^{\gamma}},
\end{equation*}
which is equivalent to (\ref{New Aphi condition by Kerman})  if $a<0<b$. In particular, we have $A_{\phi}$ implies (\ref{New Aphi condition by Kerman}). 

For the converse, it remains to show that, in case of  $ab \geq 0$, (\ref{New Aphi condition by Kerman}) implies $A_{\phi}$. To this end, suppose, then, $ab \geq 0$, say $0 \leq a <b$, so that
(\ref{Aphi for gamma power weight}) is
\begin{equation*}
 \phi  \left( \frac{1}{b-a} \int_{a}^{b} \phi^{-1} \left(   s^{-\gamma}  \right) ds \right)    \leq  C  \  b^{-\gamma}.
\end{equation*}
Since $\phi^{-1}(s^{-\gamma})$ decreases, one has
\begin{equation*}
\frac{1}{b-a} \int_{a}^{b} \phi^{-1} \left(   s^{-\gamma}  \right) ds  \leq \frac{1}{b} \int_{0}^{b} \phi^{-1} \left(   s^{-\gamma}  \right) ds.
\end{equation*}
The condition (\ref{New Aphi condition by Kerman}), with $t_1=0$ and $t_2=b$, and above inequality yield the $A_{\phi}$ condition.

Finally, (\ref{New Aphi condition by Kerman}) always implies (\ref{our condition for Hilbert transform}) - just take $t_1 = 0$. Moreover,
\begin{equation}
\begin{split}
\frac{ \int_0^{t_1} \phi^{-1}(s^{-\gamma})ds    + \int_0^{t_2} \phi^{-1}(s^{-\gamma})ds    }{t_1 + t_2} &  \leq   \frac{2}{t_2} \int_0^{t_2} \phi^{-1}(s^{-\gamma})ds \\
& \leq 2 \  \phi^{-1}(C t_{2}^{-\gamma}),
\end{split}
\end{equation}
ensures (\ref{our condition for Hilbert transform}) when (\ref{Delta two for H}) holds; see \cite{KR}.
\end{proof}

\begin{proof}[Proof of \cref{characterization for H}] The equivalence of $\ref{(H1)}$ and $\ref{(H2)}$ follows from the variant of \cref{gauge norm equivalent modular} for $|x|^{\gamma}$ on $\mathbb{R}$, since $H$ is dilation-invariant.

The condition (\ref{Delta two for H}) in 
%$\ref{(H3)}$
(BK) comes out of the inequality in $\ref{(H2)}$ in the same way it comes out of the corresponding inequality for $M$ in Theorem $7$ of \cite{BK1}, but with
\begin{equation*}
(M f_{m})(y) \geq C |E \cap B_{m}| \  |x-y|^{-1}, \ \ \ y \notin B_{m},
\end{equation*}
replaced by
\begin{equation*}
(H f_{m})(y) \geq C r_{0} \  |x-y|^{-1}, \ \ \ y \notin B_{m},
\end{equation*}
where $f_{m}= \chi_{B_{m}}$, $B_{m}= (x-2^{-m} r_{0}, x+ 2^{-m}r_{0})$. Indeed, if, for instance, $y < x- 2^{-m}r_{0}$,

\begin{align*}
-(Hf_{m})(y)= \frac{1}{\pi} \int_{x-2^{-m}r_{0}}^{x+2^{-m}r_{0}} \frac{1}{y-z}dz  & = \frac{1}{\pi} \log \left[ \frac{ x-y -2^{-m}r_{0}}  {x-y + 2^{-m}r_{0}} \right]\\
& = \frac{1}{\pi} \log \left[ 1-  \frac{ 2^{-m}r_{0}}  {x-y + 2^{-m}r_{0}} \right]\\
& \geq \frac{1}{\pi} \ \frac{ 2^{-m}r_{0}}  {x-y + 2^{-m}r_{0}} \\
& \geq \frac{1}{\pi} \ \frac{ 2^{-m-1}r_{0}}  {|x-y|}.
\end{align*}

Again, by Corollary $2.7$ in \cite{BK2}, the modular inequality in $\ref{(H2)}$ is equivalent to
\begin{equation*}
\int_{\mathbb{R}} \Psi \left(  |x|^{-\gamma} |(Hf)(x)| \right) |x|^{\gamma}dx \leq \int_{\mathbb{R}} \Psi(K |y|^{-\gamma} |f(y)|)|y|^{\gamma}dy < \infty,
\end{equation*}
which implies, by the argument above, the condition (\ref{Delta two complement for H}).

Next, the argument in \cite[p. 280]{KT}, applied to $\ref{continuity of Hilbert transform H}$ yields the $A_{\phi}$ condition in (\ref{Aphi for gamma power weight}) for $w(x)= |x|^{\gamma}$, provided one can replace $(Mf)(x)$ in
\begin{equation*}
(Mf)(x) \geq \rho_{\Psi, \epsilon |x|^{\gamma}} (\chi_{I} / \epsilon |\cdot|^{\gamma}) \epsilon \mu_{\gamma}(I)
\end{equation*}
by $|(Hf)(x)|$. In \cite{KT} $f$ was a nonnegative, measurable function supported in $I$, with $\rho_{\Psi, |x|^{\gamma}}(f)=1$ and
\begin{equation*}
\int_{I} f(x)dx = \rho_{\Psi, |x|^{\gamma}} (\chi_{I} / |\cdot|^{\gamma}).
\end{equation*}
But for this $f$ and $x \in I + |I|$, one has
\begin{equation*}
|(Hf)(x)| \geq \frac{1}{2 \pi} \ \rho_{\Psi, |x|^{\gamma}} (\chi_{I} / |\cdot|^{\gamma}) \frac{\chi_{J}(x)}{|I|},
\end{equation*}
and so, as $\Phi$ satisfies the modular inequality in $\ref{(H2)}$,
\begin{align*}
& \int_{J} \Phi \left(   \frac{  \rho_{\Psi, |x|^{\gamma}} (\chi_{I} / |\cdot|^{\gamma})  }      {|I|}    \right) |y|^{\gamma}dy\\
& \leq C \int_{\mathbb{R}} \Phi(|f(y)|)|y|^{\gamma}dy = C;
\end{align*}
that is,
\begin{equation*}
\Phi \left(   \frac{  \rho_{\Psi, |x|^{\gamma}} (\chi_{I} / |\cdot|^{\gamma})  }      {|I|}    \right) \mu_{\gamma}(J) \leq C.
\end{equation*}
Similarly, there holds
\begin{equation*}
\Phi \left(   \frac{  \rho_{\Psi, |x|^{\gamma}} (\chi_{J} / |\cdot|^{\gamma})  }      {|J|}    \right) \mu_{\gamma}(I) \leq C,
\end{equation*}
whence
\begin{equation*}
\Phi \left(   \frac{  \rho_{\Psi, |x|^{\gamma}} (\chi_{J} / |\cdot|^{\gamma})  }      {|J|}    \right) \mu_{\gamma}(J) \ \   \Phi \left(   \frac{  \rho_{\Psi, |x|^{\gamma}} (\chi_{I} / |\cdot|^{\gamma})  }      {|I|}    \right)    \mu_{\gamma}(I) \leq C^2.  
\end{equation*}
To get $A_{\phi}$ (for $\epsilon=1$, which is enough) it suffices to show
\begin{equation*}
\Phi \left(   \frac{  \rho_{\Psi, |x|^{\gamma}} (\chi_{J} / |\cdot|^{\gamma})  }      {|J|}    \right) \mu_{\gamma}(J) \geq 1,
\end{equation*}
or, equivalently,
\begin{equation*}
\frac{1}   {   \Phi^{-1} \left(\frac{1}{\mu_{\gamma}(I)} \right)    }     \  \rho_{\Psi, |x|^{\gamma}} (\chi_{J} / |\cdot|^{\gamma}) \geq |J|,
\end{equation*}
that is,
\begin{equation*}
\rho_{\Phi, |x|^{\gamma}} (\chi_{J} )  \ \  \rho_{\Psi, |x|^{\gamma}} (\chi_{J} / |\cdot|^{\gamma}) \geq |J|,
\end{equation*}
which inequality is essentially the generalized H\"older inequality
\begin{align*}
|J| & = \int_{\mathbb{R}} \chi_{J}(x) \frac{\chi_{J}(x)}{ |x|^{\gamma}} |x|^{\gamma} dx\\
& \leq 2 \rho_{\Phi, |x|^{\gamma}} (\chi_{J} )  \ \   \rho_{\Psi, |x|^{\gamma}} (\chi_{J} / |\cdot|^{\gamma}).
\end{align*}

Finally, we prove (BK)
%$\ref{(H3)}$
 implies $\ref{(H2)}$. According to Theorem $7$ in \cite{KT}, $|x|^{\gamma}$ in $A_{\phi}$ together with (\ref{Delta two for H}) and (\ref{Delta two complement for H}), implies $|x|^{\gamma}$ satisfies the $A_{\infty}$ condition, namely, there exist constants $C, \delta > 0$ so that for any interval $I$ and any measurable subset $E$ of $I$,
\begin{equation*}
\frac{\mu_{\gamma}(E)}{\mu_{\gamma}(I)} \leq C  \left( \frac{|E|}{|I|} \right)^{\delta}.
\end{equation*}

The argument on p. $245$ of \cite{CF} then ensures the maximal Hilbert transform, $H^*$, defined at $f \in L_{1} \left( \frac{1}{1+ |y|} \right)$ by
\begin{equation*}
(H^*f)(x)= \sup_{\epsilon >0} \biggl|  \frac{1}{\pi}  \int_{|x-y|> \epsilon} \frac{f(y)}{ x-y}dy  \biggr|, \ \ \ x \in \mathbb{R},
\end{equation*}
satisfies, for any given $\alpha >0$ and the $\delta > 0$ in the $A_{\infty}$ condition,
\begin{equation*}
\int_{ \{ H^*f > 2 \lambda, \  Mf \leq \alpha \lambda \}} |x|^{\gamma} dx \leq C \alpha^{\delta} \int_{ \{   Mf >  \lambda \}} |x|^{\gamma} dx,
\end{equation*}
in which $C>0$ does not depend on $\alpha$, $\lambda$ or $f \in L_{1} \left( \frac{1}{1+ |y|} \right)$.

We thus have, since $\Phi$ satisfies (\ref{Delta two for H}),
\begin{align*}
\int_{\mathbb{R}} \Phi \left( (H^*f)(x) \right) |x|^{\gamma} dx & = C \int_{\mathbb{R_+}} \phi(\lambda)    \int_{ \{ H^*f > 2 \lambda \}} |x|^{\gamma} dx  \  d \lambda \\
& \leq C \int_{\mathbb{R_+}} \phi(\lambda)    \int_{ \{ Mf > \alpha \lambda \}} |x|^{\gamma} dx  \  d \lambda     +   C \alpha^{\delta} \int_{\mathbb{R_+}} \phi(\lambda)    \int_{ \{ H^*f >  \lambda \}} |x|^{\gamma} dx  \  d \lambda \\
& = \frac{C}{\alpha} \int_{\mathbb{R_+}} \phi({\lambda / \alpha})    \int_{ \{ Mf >  \lambda \}} |x|^{\gamma} dx  \  d \lambda     +   C \alpha^{\delta} \int_{\mathbb{R_+}} \phi(\lambda)    \int_{ \{ H^*f >  \lambda \}} |x|^{\gamma} dx  \  d \lambda \\
& \leq C' \int_{\mathbb{R_+}} \phi({\lambda })    \int_{ \{ Mf >  \lambda \}} |x|^{\gamma} dx  \  d \lambda     +   C \alpha^{\delta} \int_{\mathbb{R_+}} \phi(\lambda)    \int_{ \{ H^*f >  \lambda \}} |x|^{\gamma} dx  \  d \lambda \\
\end{align*}
Taking $\alpha$ such that $C \alpha^{\delta} < \frac{1}{2}$ we get
\begin{align*}
\int_{\mathbb{R}} \Phi \left( |(Hf)(x)| \right) |x|^{\gamma} dx & \leq \int_{\mathbb{R}} \Phi \left( (H^*f)(x) \right) |x|^{\gamma} dx \\
& \leq K \int_{\mathbb{R}} \Phi \left( (Mf)(x) \right) |x|^{\gamma} dx \\
& \leq  \int_{\mathbb{R}} \Phi \left( K |f(x)| \right) |x|^{\gamma} dx,
\end{align*}
by \cref{characterization for M}, since (\ref{our condition for Hilbert transform}) implies (\ref{our condition for maximal function}).
\end{proof}

\section{Appendix I}

The two general results in this appendix are variants of Theorem 3.1 and  4.1 in \cite{BK2}.

\begin{proposition}\label{weak vs weighted weak modular}
Let $t,u,v$ and $w$ be weights on $\mathbb{R_+}$. Suppose 
$\Phi_1$ and $\Phi_2$ are  nonnegative nondecreasing functions on $\mathbb{R_+}.$ Then, the general weighted modular inequality for 
\begin{equation*}
(If)(x)= \int_0^x f(y)dy ~, 0 \leq f\in M_+(\mathbb{R_+}), x\in \mathbb{R_+},
\end{equation*}
namely,
\begin{equation}\label{eq:M}
\int_{\mathbb{R_+}}\Phi_1(w(x)If(x))t(x)dx \leq \int_{\mathbb{R_+}} \Phi_2(Ku(y)f(y))v(y)dy
\end{equation}
is equivalent to the weighted weak-type modular inequality
\begin{equation}\label{eq:WM}
\int_{\{x\in \mathbb{R_+}: (If)(x) > \lambda \}} \Phi_1(\lambda w(x))t(x)dx \leq \int_{\mathbb{R_+}}\Phi_2(Ku(y)f(y))v(y)dy.
\end{equation}
in both of which $K>0,$ is independent of $0 \leq f \in M(\mathbb{R_+})$ and in $(\ref{eq:WM})$ is independent of $\lambda$ as well.
\end{proposition}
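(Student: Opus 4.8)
The plan is to prove the two inequalities equivalent by establishing the elementary implication \eqref{eq:M} $\Rightarrow$ \eqref{eq:WM} directly, and then the substantive converse \eqref{eq:WM} $\Rightarrow$ \eqref{eq:M} by a dyadic decomposition keyed to the level sets of the Hardy transform, following the scheme of \cite[Theorem 3.1]{BK2}. For the easy direction, fix $\lambda>0$ and $0\le f\in M(\mathbb{R_+})$. On the set $\{x:(If)(x)>\lambda\}$ one has $\lambda<(If)(x)$, hence $\lambda w(x)\le w(x)(If)(x)$, and since $\Phi_1$ is nondecreasing, $\Phi_1(\lambda w(x))\le\Phi_1\!\big(w(x)(If)(x)\big)$. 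Integrating against $t$ over $\{If>\lambda\}$ and then enlarging the domain of integration to all of $\mathbb{R_+}$ gives $\int_{\{If>\lambda\}}\Phi_1(\lambda w)t\,dx\le\int_{\mathbb{R_+}}\Phi_1(w\,If)t\,dx$, which is at most the right-hand side of \eqref{eq:M}. The same $K$ works and the bound is uniform in $\lambda$, which is precisely \eqref{eq:WM}.

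For the converse I would exploit that $f\ge 0$ forces $F:=If$ to be nondecreasing and absolutely continuous, hence continuous. Assuming $\int_{\mathbb{R_+}}\Phi_2(Kuf)v<\infty$ (otherwise \eqref{eq:M} is trivial), I would choose stopping points $x_k$ with $F(x_k)=2^k$, $k\in\mathbb{Z}$, so that the coronas $\Delta_k=(x_{k-1},x_k]$ are pairwise disjoint, exhaust $\{F>0\}$, and satisfy $2^{k-1}<F\le 2^k$ on $\Delta_k$. Writing $f=\sum_k f\chi_{\Delta_k}$, the governing geometric fact is that the Hardy transform of $f$ restricted to a bounded number of adjacent coronas dominates a fixed multiple of $2^k$ throughout $\Delta_k$, so that an appropriate super-level set of that localized transform contains $\Delta_k$. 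Applying \eqref{eq:WM} to each localized piece at the level matching $2^k$ then bounds $\int_{\Delta_k}\Phi_1(c\,2^k w)t\,dx$, and hence $\int_{\Delta_k}\Phi_1(wF)t\,dx$ since $F\le 2^k$ there, by a multiple of the localized right-hand side $\int_{\Delta_k}\Phi_2(Kuf)v$.

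I would then sum over $k$. Because the coronas are disjoint, the localized right-hand sides add up exactly to the single modular $\int_{\mathbb{R_+}}\Phi_2(Kuf)v$, and the boundedly-many-overlap structure of the chosen pieces keeps the total count under control; this yields \eqref{eq:M} with an enlarged constant. The engine here is that \eqref{eq:WM} is assumed for \emph{all} nonnegative $f$, with an additive right-hand side over disjoint supports, so the weak-type information on the pieces can be reassembled into a strong-type bound for the whole function.

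The hard part is exactly this weak-to-strong passage, for two intertwined reasons. First, the weight $w$ sits \emph{inside} $\Phi_1$, so one cannot reduce $\int\Phi_1(w F)t$ to level sets of $F$ by a naive layer-cake argument; the decomposition must be carried out in the $x$-variable, where the monotonicity of $F$ is what guarantees that its super-level sets are half-lines and the coronas are genuine intervals on which the localized Hardy transforms reproduce $F$ up to a fixed constant. Second, reconstructing $\int\Phi_1(wF)t$ from super-level-set data compares $\Phi_1$ at arguments separated by the dyadic ratio, while no $\Delta_2$ or growth hypothesis on $\Phi_1,\Phi_2$ is available; the delicate point is to arrange the localization so that these geometric factors are absorbed into the overall constant and, on the right, into the freedom to enlarge $K$, rather than being converted into a regularity requirement on the $\Phi_i$. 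The careful bookkeeping that matches the level of \eqref{eq:WM} to the size of $F$ on each corona, and that sums the disjoint right-hand sides without loss, is the content of the variant of \cite[Theorem 3.1]{BK2}, and I would carry it out along those lines.
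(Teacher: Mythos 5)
Your proposal is correct and follows essentially the same route as the paper's proof: the easy direction by monotonicity of $\Phi_1$, and the converse via stopping points $x_k$ with $(If)(x_k)=2^k$, bounding $\Phi_1(w\,If)\le\Phi_1(2^kw)$ on each corona, applying the weak-type inequality to a dilated localization of $f$ on the \emph{preceding} corona, and summing the disjointly supported right-hand sides. The paper's concrete realization of your ``governing geometric fact'' is $I\bigl(8f\chi_{[x_{k-2},x_{k-1})}\bigr)\ge 2^{k+1}>2^k$ on $[x_{k-1},x_k)$, which places the dilation factor $8$ inside $\Phi_2$ (enlarging $K$ to $8K$) exactly as you stipulate, so that no constant $c<1$ ever appears inside $\Phi_1$ and no $\Delta_2$ condition is needed.
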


\begin{proof} Clearly, $(\ref{eq:M})$ implies $(\ref{eq:WM})$.
To prove the converse fix $f \geq 0$ and choose $x_k$ so that $ If(x_k)=2^k , k =0,\pm 1,\pm 2,\ldots$ and set $I_k=[x_{k-1},x_k)$ and $f_k= f\chi_{I_k}.$
Then,
\begin{equation*}
\begin{split}
\int_{\R} \Phi_1( w(x)If(x)) t(x)dx & = \sum_{k \in \mathbb Z} \int_{I_k} \Phi_1( w(x)If(x)) t(x)dx\\
& \leq \sum_{k \in \mathbb Z}  \int_{I_k} \Phi_1( 2^k w(x)) t(x)dx.\\
\end{split}
\end{equation*}
For  $x \in I_k,$ one has 
\begin{equation*}
I(8f_{k-1})(x)\geq 8 \int_0^{x_{k-1}} f_{k-1}(y) dy=8\int_{x_{k-2}}^{x_{k-1}}f(y)dy=2^{k+1}>2^k,
\end{equation*}
so
\begin{equation*}
 I_k \subset  \{x\in \mathbb{R_+}: I(8 f{k-1})(x) > 2^k\}.
 \end{equation*}
Thus, by $(\ref{eq:WM})$
\begin{equation*}
\begin{split}
\int_{I_k} \Phi_1( 2^k w(x))) t(x)dx 
& \leq \int_ \mathbb{R_+}  \Phi_2(8 K f_{k-1}(x))v(y)dy\\
&=\int_{x_{k-2}}^{x_{k-1}}\Phi_2(Ku(y)f(y))v(y)dy.
\end{split}
\end{equation*}
Altogether, then,

\begin{equation*}
\begin{split}
\int_\mathbb{R_+} \Phi_1( w(x)If(x)) t(x)dx & \leq \sum_{k \in \mathbb Z} \int_{I_k}\Phi_1(2^k w(x))t(x))dx \\
&  \leq  \sum_{k \in \mathbb Z} \int_{\mathbb{R_+}}\Phi_2(8Kf_{k-1}(x)u(x))v(x)dx\\
& = \int_{\mathbb{R_+}}\Phi_2(8K \sum_{k \in \mathbb Z} f_{k-1}(x) u(x))v(x)dx\\
& =  \int_{\mathbb{R_+}}\Phi_2(8 K f(x)u(x))v(x)dx
\end{split}
\end{equation*}
\end{proof}

\begin{proposition}\label{conditions for weak vs weighted weak modular}
Let $t,u,v,w$ and $w$ and
$\Phi_1$ and $\Phi_2$ be as in the Proposition 1. Assume , moreover, that $\Phi_2$ is a Young function. Then, $(\ref{eq:WM})$ (and hence $(\ref{eq:M})$ ) holds if and only if
\begin{equation}
\int_0^x \Psi_2 \left( \frac{\alpha(\lambda,x)}{C \lambda u(y)v(y)} \right)v(y)dy \leq \alpha(\lambda , x) = \int_x^{\infty}\Phi_1(\lambda w(y))t(y)dy< \infty
\end{equation}
or equivalently,
\begin{equation}
\int_0^x \phi^{-1}_2 \left( \frac{\alpha(\lambda,x)}{C \lambda u(y)v(y)} \right) \frac{dy}{v(y)} \leq C,
\end{equation}
in both of which $C>0$ is independent of $\lambda, x \in \mathbb {R_+}$.

\end{proposition}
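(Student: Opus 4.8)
The plan is to characterize the weak-type inequality \eqref{eq:WM} directly, since \Cref{weak vs weighted weak modular} already equates \eqref{eq:M} with \eqref{eq:WM}. Two features drive everything: the Hardy operator $If(x)=\int_0^x f$ is nondecreasing (and continuous where finite), and the pair $(\Phi_2,\Psi_2)$ obeys Young's inequality $ab\le\Phi_2(a)+\Psi_2(b)$, with equality exactly when $b=\phi_2(a)$. By monotonicity, every super-level set $\{x:If(x)>\lambda\}$ is a half-line $(x_\lambda,\infty)$ with $\int_0^{x_\lambda}f=\lambda$, so the left side of \eqref{eq:WM} equals $\alpha(\lambda,x_\lambda)=\int_{x_\lambda}^{\infty}\Phi_1(\lambda w)t$. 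Truncating $f$ to $(0,x_\lambda)$ leaves this unchanged and can only decrease the right side, so I would first reduce \eqref{eq:WM} to the pointwise-in-$(\lambda,x)$ statement
\begin{equation*}
\alpha(\lambda,x)\le\inf\left\{\int_0^x\Phi_2\big(Ku(y)f(y)\big)v(y)\,dy : f\ge0,\ \int_0^x f=\lambda\right\},\qquad \lambda,x\in\mathbb{R_+}.
\end{equation*}

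For the sufficiency of the displayed condition I would argue directly. Fix $f,\lambda$ and the endpoint $x_\lambda$, write $\alpha=\alpha(\lambda,x_\lambda)$, and apply Young's inequality in the form $\Phi_2(a)\ge ab-\Psi_2(b)$ pointwise with $a=Ku(y)f(y)$ and the weight
\begin{equation*}
b(y)=\frac{\alpha}{C\lambda\,u(y)v(y)},\qquad\text{so that}\qquad Ku(y)f(y)\,b(y)\,v(y)=\frac{K\alpha}{C\lambda}\,f(y).
\end{equation*}
Integrating against $v$ over $(0,x_\lambda)$ and using $\int_0^{x_\lambda}f=\lambda$ gives
\begin{equation*}
\int_0^{x_\lambda}\Phi_2(Kuf)v\ \ge\ \frac{K}{C}\,\alpha-\int_0^{x_\lambda}\Psi_2\!\left(\frac{\alpha}{C\lambda uv}\right)v\ \ge\ \Big(\frac{K}{C}-1\Big)\alpha,
\end{equation*}
the last step being exactly the hypothesis $\int_0^{x_\lambda}\Psi_2(\alpha/(C\lambda uv))v\le\alpha$. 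Choosing $K=2C$ makes the right side at least $\alpha=\alpha(\lambda,x_\lambda)$, which is \eqref{eq:WM}. This half is routine once the weight $b$ is guessed.

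For necessity I would feed \eqref{eq:WM} the extremal profile that turns Young's inequality into an equality, namely $f(y)=\frac{1}{Ku(y)}\phi_2^{-1}\!\big(\mu/(u(y)v(y))\big)$ on $(0,x)$, with $\mu=\mu(\lambda,x)$ fixed by the mass normalization $\int_0^x f=\lambda$. For this $f$ the equality case gives $\Phi_2(Kuf)=\frac{\mu}{uv}\phi_2^{-1}(\frac{\mu}{uv})-\Psi_2(\frac{\mu}{uv})$, so the infimum above equals $\mu K\lambda-\int_0^x\Psi_2(\mu/(uv))v$, and \eqref{eq:WM} becomes $\int_0^x\Psi_2(\mu/(uv))v\le\mu K\lambda-\alpha(\lambda,x)$. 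I expect the genuine obstacle to be precisely here: one must check that $\mu$ can always be chosen to meet the constraint, that $x$ really sits at the top of the super-level set so the left side of \eqref{eq:WM} is no smaller than $\alpha(\lambda,x)$, and — most delicately — that this extremal inequality can be converted, by absorbing the optimal $\mu$ into the free constant and adjusting $K$ and $C$, into the clean stated form $\int_0^x\Psi_2(\alpha/(C\lambda uv))v\le\alpha$. The finiteness $\alpha(\lambda,x)<\infty$ is forced along the way, since otherwise the left side could be made infinite against a finite right side; monotonicity of $\phi_2^{-1}$ and convexity of $\Phi_2$ are what make the extremal $f$ admissible and the equality case available.

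Finally, the equivalence of the two displayed forms of the condition is a pointwise comparison using the elementary bounds $\tfrac{s}{2}\phi_2^{-1}(\tfrac{s}{2})\le\Psi_2(s)\le s\,\phi_2^{-1}(s)$: substituting $s=\alpha/(C\lambda uv)$ and cancelling the factor $s$ converts the $\Psi_2$-integral against $v\,dy$ into the corresponding $\phi_2^{-1}$-integral, the power of the weight shifting accordingly, at the cost only of replacing $C$ by a comparable constant.
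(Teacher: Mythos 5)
Your sufficiency half is correct and is essentially the paper's own argument: Young's inequality $ab\le\Phi_2(a)+\Psi_2(b)$ with $b=\alpha/(C\lambda\,u(y)v(y))$, the mass constraint to produce the term $\tfrac{K}{C}\alpha$, and absorption of the $\Psi_2$-integral by the hypothesis (the paper takes $a=2Kf(y)u(y)$, $b=\alpha(\lambda,\gamma)/(K\lambda u(y)v(y))$, so $K$ cancels instead of being chosen as $2C$). One small repair is needed: your claim that the super-level set is $(x_\lambda,\infty)$ with $\int_0^{x_\lambda}f=\lambda$ fails when $If$ jumps to $+\infty$ (non-locally-integrable $f$); the paper avoids this by working at an arbitrary $\gamma>\beta$, where $If(\gamma)>\lambda$ holds outright, and letting $\gamma\downarrow\beta$ at the end. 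Your closing remark on passing between the $\Psi_2$-form and the $\phi_2^{-1}$-form via $\tfrac{s}{2}\phi_2^{-1}(\tfrac{s}{2})\le\Psi_2(s)\le s\,\phi_2^{-1}(s)$ is also fine.

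The genuine gap is in necessity, and you have flagged it yourself: converting the extremal inequality $\int_0^x\Psi_2(\mu/(uv))v\le\mu K\lambda-\alpha$ into the stated form is not a matter of ``absorbing $\mu$ into the constant,'' because (\ref{eq:WM}) only ever yields \emph{upper} bounds on $\alpha$; since $\mu K\lambda$ may vastly exceed $\alpha$, no amount of constant-juggling closes the loop from these inequalities alone. The missing idea is the paper's two-sided comparison of $\phi_2^{-1}$ with $\Psi_2$: from your normalization (say $\int_0^x\phi_2^{-1}(\mu/(uv))\,dy/u=2K\lambda$, with mass $2\lambda$ so the level set genuinely contains $(x,\infty)$) and the consequence $\alpha\le 2K\lambda\mu$ of (\ref{eq:WM}) together with $\Phi_2(\phi_2^{-1}(s))\le s\,\phi_2^{-1}(s)$, set $J=\int_0^x\phi_2^{-1}\bigl(\alpha/(2K\lambda uv)\bigr)\,dy/u$; monotonicity gives $J\le 2K\lambda$, while $\phi_2^{-1}(s)\ge\Psi_2(s)/s$ gives $J\ge\frac{2K\lambda}{\alpha}\int_0^x\Psi_2\bigl(\alpha/(2K\lambda uv)\bigr)v\,dy$, and combining the two yields exactly $\int_0^x\Psi_2\bigl(\alpha/(C\lambda uv)\bigr)v\,dy\le\alpha$ --- the prefactor $2K\lambda/\alpha$ is what makes a lower bound on $\alpha$ unnecessary. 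The paper runs this same $J$-trick with its parameter $\epsilon$ normalized by $\int_E\Psi_2(\epsilon/(uv))(v/\epsilon)\,dy=2K\lambda$. Two further points you gloss over, which the paper's construction is specifically designed to handle: your exact-mass choice of $\mu$ may be unachievable, both because $\phi_2^{-1}$ can jump (flat pieces of $\phi_2$) and because the normalizing integral may be $+\infty$ for every $\mu$ --- the paper instead builds the test function from $\Psi_2(t)/t$, which is continuous and increasing onto $\mathbb{R_+}$ so the intermediate value theorem applies, and works on exhausting sets $E_n\uparrow(0,x)$ with $\int_{E_n}\Psi_2(1/(uv))v\,dy<\infty$, passing to the limit only at the end; and the finiteness $\alpha<\infty$ is not automatic but is extracted from $\Phi_2(\Psi_2(t)/t)\le\Psi_2(t)$, giving $\alpha\le 2K\lambda\epsilon<\infty$. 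As written, your necessity direction is a program rather than a proof, though one that can be completed along exactly these lines.
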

\begin{proof} Suppose  $(\ref{eq:WM})$  holds and fix $x \in \mathbb {R_+}$.
Since $u$ and $v$ are weights, they are positive a.e. and so
\begin{equation*}
{\Psi_2} \left( \frac{1}{u(y)v(y)} \right)v(y) < \infty , ~~ y \,a.e.
\end{equation*}
Let the set $E_n \subseteq (0,x)$ be such that $E_n \uparrow (0,x)$ and 
\begin{equation*}
\int_{E_n} \Psi_2 \left( \frac{1}{u(y)v(y)} \right)v(y) < \infty, 
\end{equation*}

Given $E_n=E,$ say, consider
\begin{equation*}
\int_E \Psi_2 \left( \frac{\epsilon}{u(y)v(y)} \right)\frac{v(y)}{\epsilon} dy,
\end{equation*}
for $\epsilon \in \mathbb {R_+}.$ Now, $\frac{\Psi_2(t)}{t}$ is continuous, increasing and maps onto $ \mathbb {R_+}$ onto itself, so the same is true of the integral as a function of $\epsilon \in \mathbb {R_+}.$  Thus, given $\lambda \in \mathbb{R_+},$ there exists $\epsilon \in \mathbb {R_+}$ such that
\begin{equation*}
\int_E \Psi_2 \left( \frac{\epsilon}{u(y)v(y)} \right)\frac{v(y)}{\epsilon} =2K \lambda
\end{equation*}

Set
\begin{equation*}
f(y)= \frac{1}{K} \Psi_2 \left( \frac{\epsilon}{u(y)v(y)} \right)\frac{v(y)}{\epsilon}. \chi_E(y)
\end{equation*}
Then, for $s \geq x$,
\begin{equation*}
If(s)\geq If(x)=2\lambda > \lambda.
\end{equation*}
 So,
\begin{equation*}
\begin{split}
\alpha(\lambda,x) & = \int_x^{\infty}\Phi_1(\lambda w(y))t(y)dy\\
& \leq \int_{\{ If> \lambda \}}\Phi_1(\lambda w(y))t(y)dy \\
& \leq \int_{\mathbb {R_+}}\Phi_2(Ku(y)f(y))v(y)dy \\
& = \int_Ev(y)\Phi_2 \left( \frac{u(y)v(y)}{\epsilon} \Psi_2\left( \frac{\epsilon}{u(y)v(y)} \right) \right)dy\\
& \leq \int_E \Psi_2 \left( \frac{\epsilon}{u(y)v(y)} \right)v(y)dy = 2K \lambda \epsilon < \infty.
\end{split}
\end{equation*}
In the last inequality we used the fact that for any Young function $ \Phi$ with complementary function $\Psi$ one has 
$\Phi(\frac{\Psi(t)}{t}) \leq \Psi(t), t \in {\mathbb {R_+}}.$

The forgoing implies (since $\phi^{-1}_2(t) \leq \frac{\Psi_2(2t)}{t})$
\begin{equation*}
\begin{split}
J= & \int_E \phi^{-1}_2 \left(  \frac{\alpha(\lambda,x)}{4K \lambda u(y)v(y)}\right)\frac{dy}{v(y)} \leq  \int_E \phi^{-1}_2 \left(  \frac{\epsilon}{2u(y)v(y)}\right)\frac{dy}{v(y)} \\
& \leq 2 \int_E \Psi_2 \left(  \frac{\epsilon}{u(y)v(y)}\right)\frac{v(y)dy}{\epsilon}= 4K \lambda.
\end{split}
\end{equation*}

But, again, using $\phi^{-1}_2(t) \geq \frac{\Psi_2(t)}{t},$

\begin{equation*}
\begin{split}
J\geq & \frac{4K\lambda}{\alpha(\lambda,x)}\int_E \Psi_2 \left(  \frac{\alpha(\lambda,x)}{4K \lambda u(y)v(y)}\right) v(y)dy.
\end{split}
\end{equation*}

Combining these two inequalities involving $J$ and letting $n \to \infty$ we obtain

\begin{equation*}
\int_0^x \Psi_2 \left(  \frac{\alpha(\lambda,x)}{C \lambda u(y)v(y)}\right)v(y)dy  \leq \alpha(\lambda,x),
\end{equation*}
with $C=4K.$

Conversely, assume we have $(\ref{eq:M})$. Fix $0 \leq f\in M(\mathbb{R_+})$ and $\lambda>0.$
Put  
$E_{\lambda}=\{x \in \mathbb{R_+}:  If(x)> \lambda \}.$ Assume, without loss of generality that $E_{\lambda} \neq \varnothing.$\\
Now $If(x)= \int_0^x f$ is increasing so, without loss of generality we can assume $\int_0^{\infty}f= \infty.$\\
Then $ E_{\lambda}$ has the the form $(\beta, \infty)$ or $[ \beta,\infty).$ 

Let $\gamma > \beta,$ so that $If(\gamma)> \lambda,$ whence

\begin{equation*}
\begin{split}
2\alpha(\lambda,\gamma) & \leq 2\alpha(\lambda,\gamma).\frac{If(\gamma)}{\lambda}= \int_0^{\gamma} 2Kf(y)u(y)\frac{\alpha(\lambda,\gamma)}{\lambda Ku(y)v(y)}.v(y)dy\\
& \leq \int_0^{\gamma} \Phi_2 \left( 2Kf(y)u(y) \right) v(y)dy
+  \int_0^{\gamma} \Psi_2 \left( \frac{\alpha(\lambda,\gamma)}{K\lambda u(y)v(y)}\right)v(y)dy\\
& \leq \int_0^{\gamma} \Phi_2 \left( 2Kf(y)u(y) \right) v(y)dy + \alpha(\lambda,\gamma).
\end{split}
\end{equation*}
So,
\begin{equation*}
\alpha(\lambda,\gamma) \leq \int_0^{\gamma} \Phi_2 \left( 2Kf(y)u(y) \right) v(y)dy.
\end{equation*}
Letting $\gamma \rightarrow \beta$, we have
\begin{equation*}
\int_{\{If > \lambda \}}\Phi_1(\lambda w(x))t(x)dx \leq \int_{\mathbb{R_+}}\Phi_2(Ku(y)f(y))v(y)dy,
\end{equation*}
with constant as $2K.$
\end{proof}

\section{Appendix II}
Let $\Phi(t)= \int_0^t \phi(s)ds$, $t \in \mathbb{R_+}$ be a Young function and let $w$ be a weight on $\mathbb{R}^n$. The conditions
\begin{equation}\label{Bloom Kerman condition}
\frac{1}{w(Q)} \int_{Q} \phi^{-1} \left( \frac{1}{C} \frac{\Phi(\lambda)}{\lambda} \frac{w(Q)}{|Q|} \frac{1}{w(x)} \right)dx \leq \Phi(\lambda) \tag{BK}
\end{equation}
and
\begin{equation}\label{Torcinski condition}
\frac{\epsilon w(Q)}{|Q|} \phi \left( \frac{1}{|Q|} \int_{Q} \phi^{-1}  \left( \frac{1}{\epsilon w(x)} \right) dx \right) \leq C, \tag{$A_{\phi}$}
\end{equation}
in which $C>1$ is to be independent of $\lambda, \epsilon $ in $\mathbb{R_+}$ and $Q$ is a cube in $\mathbb{R}^n$, $w(Q)= \int_{Q} w(x)dx$, were introduced in \cite{BK1} and \cite{KT}, respectively. To put the two conditions on the same footing we will work with (\ref{Bloom Kerman condition}) in the equivalent form
\begin{equation*}
\frac{1}{|Q|} \int_{Q} \phi^{-1} \left( \frac{1}{C} \phi(\lambda) \frac{w(Q)}{|Q|} \frac{1}{w(x)} \right)dx \leq C \lambda.
\end{equation*}

Our aim in this section is to compare (\ref{Bloom Kerman condition}) and (\ref{Torcinski condition}) in the context of power weights on $\mathbb{R}$, namely, the conditions (\ref{our condition for Hilbert transform})  and (\ref{New Aphi condition by Kerman}). We have already observed that (\ref{New Aphi condition by Kerman})  implies (\ref{our condition for Hilbert transform}). Indeed, (\ref{Torcinski condition}) implies (\ref{Bloom Kerman condition}) in general, as seen in
\begin{theorem}
Let $\Phi(t)=\int_0^t \phi(s)ds, \ t \in \mathbb{R_+}$, and let $w$ be a weight on $\mathbb{R}^n$. Then, (\ref{Torcinski condition}) implies (\ref{Bloom Kerman condition}). 
\end{theorem}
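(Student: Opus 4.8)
The plan is to fix a cube $Q$ and a level $\lambda \in \mathbb{R}_+$ and to exhibit a single value of $\epsilon$ that converts the left–hand side of (\ref{Bloom Kerman condition}) into exactly the quantity on which (\ref{Torcinski condition}) acts. I work with the stated equivalent form
\[
\frac{1}{|Q|}\int_Q \phi^{-1}\!\left(\frac{1}{C}\,\phi(\lambda)\,\frac{w(Q)}{|Q|}\,\frac{1}{w(x)}\right)dx \le C\lambda ,
\]
and, assuming first that $\phi(\lambda)>0$, I choose $\epsilon$ so that $\frac{1}{\epsilon} = \frac{1}{C}\,\phi(\lambda)\,\frac{w(Q)}{|Q|}$; this is legitimate since $0<w(Q)<\infty$. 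With this $\epsilon$ the argument of $\phi^{-1}$ above is precisely $\frac{1}{\epsilon\, w(x)}$, so the left–hand side equals
\[
a := \frac{1}{|Q|}\int_Q \phi^{-1}\!\left(\frac{1}{\epsilon\, w(x)}\right)dx ,
\]
which is exactly the inner average appearing inside $\phi$ in (\ref{Torcinski condition}).

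Next I feed $a$ into (\ref{Torcinski condition}). Writing $C_0>1$ for the constant there, the inequality $\frac{\epsilon\, w(Q)}{|Q|}\,\phi(a)\le C_0$ rearranges to $\phi(a)\le \frac{C_0|Q|}{\epsilon\, w(Q)}=\frac{C_0}{C}\,\phi(\lambda)$, the last equality using the defining relation for $\epsilon$. This is where the free constant in (\ref{Bloom Kerman condition}) earns its keep: taking $C>C_0$, say $C=2C_0$, forces the \emph{strict} bound $\phi(a)\le \tfrac12\phi(\lambda)<\phi(\lambda)$.

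The final step is the passage from $\phi(a)<\phi(\lambda)$ to $a\le\lambda$, and this is the only place where care is needed, since $\phi$ is merely nondecreasing and so may have flat stretches. The point is that the inequality is now strict: if one had $a>\lambda$, monotonicity of $\phi$ would give $\phi(a)\ge\phi(\lambda)$, contradicting $\phi(a)<\phi(\lambda)$. Hence $a\le\lambda\le C\lambda$, which is the equivalent form of (\ref{Bloom Kerman condition}) with constant $C=2C_0>1$. I expect the genuine obstacle to be exactly this monotonicity passage: without the spare factor $1/C$ inside $\phi^{-1}$, the condition (\ref{Torcinski condition}) would only deliver the non-strict estimate $\phi(a)\le\phi(\lambda)$, which cannot be turned into a bound on $a$ in the presence of a long flat part of $\phi$; it is precisely the $1/C$ that manufactures strictness and neutralizes those flat parts.

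It remains to dispose of the degenerate level $\phi(\lambda)=0$, which the choice of $\epsilon$ above excludes. There the inner argument $\frac{1}{C}\phi(\lambda)\frac{w(Q)}{|Q|}\frac{1}{w(x)}$ vanishes identically, so the left–hand side collapses to the constant $\phi^{-1}(0)$; under the normalization $\phi^{-1}(0)=0$ (the one that makes $\Psi(t)=\int_0^t\phi^{-1}$ a Young function) this is trivially $\le C\lambda$. Since the whole argument uses only (\ref{Torcinski condition}) together with the monotonicity of $\phi$ and elementary manipulations of the averages, it is insensitive to the dimension $n$.
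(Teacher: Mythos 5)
Your proof is correct and follows essentially the same route as the paper: both arguments fix $\frac{1}{\epsilon} = \frac{1}{C}\,\phi(\lambda)\,\frac{w(Q)}{|Q|}$ so that the inner average in (\ref{Bloom Kerman condition}) becomes exactly the quantity to which (\ref{Torcinski condition}) applies. Your execution is in fact slightly more careful than the paper's, which concludes by formally writing $\phi^{-1}\left(\phi(\lambda)\right) = \lambda$ (not literally valid when the merely nondecreasing $\phi$ has flat stretches), whereas your choice $C = 2C_0$ makes the bound $\phi(a) < \phi(\lambda)$ strict and deduces $a \leq \lambda$ from monotonicity alone, at the harmless cost of a factor $2$ in the constant, while also disposing of the degenerate level $\phi(\lambda)=0$ that the substitution for $\epsilon$ silently excludes.
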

\begin{proof}
Writing (\ref{Torcinski condition}) in the form
\begin{equation*}
\frac{1}{|Q|} \int_{Q} \phi^{-1} \left( \frac{1}{\epsilon w(x)} \right)dx \leq \phi^{-1} \left( \frac{1}{\epsilon} \frac{C|Q|}{ w(x)} \right),
\end{equation*}
then setting $\frac{1}{\epsilon} = \phi(\lambda) \frac{w(Q)}{C|Q|}$, we obtain
\begin{equation*}
\frac{1}{|Q|} \int_{Q} \phi^{-1} \left( \frac{1}{C} \phi(\lambda) \frac{w(Q)}{|Q|} \frac{1}{w(x)} \right)dx \leq \phi^{-1} \left( \phi(\lambda) \right)= \lambda,
\end{equation*}
which is of course, (\ref{Bloom Kerman condition}).
\end{proof}
We now show that to each power weight $\mu_{\gamma}(x)=|x|^{\gamma}, \gamma >0$, on $\mathbb{R}$ there corresponds a Young function, $\Phi_{\gamma}(t)= \int_0^t \phi_{\gamma}(s)ds$, $t \in \mathbb{R_+}$, for which (\ref{Bloom Kerman condition}) holds, but (\ref{Torcinski condition}) doesn't.

\begin{example}
We define $\Phi_{\gamma}$ in terms of decreasing function $\chi$ as
\begin{equation*}
\phi_{\gamma}(t) = \chi(t^{- \frac{1}{\gamma}}), \ \ \  t \in \mathbb{R_+},
\end{equation*}
where
\begin{equation*}
\chi(t)= \log(e/t), \ \ \ 0<t\leq 1,
\end{equation*}
and
\begin{equation*}
    \chi(t)= \begin{cases}
                 \frac{1}{2^{k}} \left( 1 - \frac{t - a_{k}}{2} \right) & a_{k}<t \leq a_{k}+1,\\
                \frac{1}{2^{k+1}}  & a_{k}+1<t \leq a_{k+1},
                \end{cases}
\end{equation*}
with $a_{0}=1$ and $a_{k}= (k+3)!$, $ k \geq 1$.

If (\ref{Torcinski condition}) held, one would have, on taking $t_{1}=t_{2}=t$ in (\ref{New Aphi condition by Kerman})
\begin{equation*}
\frac{1}{t} \int_0^t \chi(s)ds \leq \chi \left( t/C^{\frac{1}{\gamma}} \right), \ \ \ t \in \mathbb{R_+},
\end{equation*}
for same $C >1$. But, for $k \geq 1$,
\begin{equation*}
\frac{1}{a_{k}} \int_0^{a_{k}} \chi(s)ds \geq \frac{1}{2^{k}}= \chi \left( a_{k}/k \right).
\end{equation*}
It thus suffices to show
\begin{equation*}
\frac{1}{t} \int_0^t \chi(s)ds \leq 4\chi \left( t/4^{\frac{1}{\gamma}} \right), \ \ \ t \in \mathbb{R_+}.
\end{equation*}
This is readily done when $0 < t \leq 1$. For $t \in (a_{k}, a_{k+1}]$, $k \geq 0$, one has
\begin{equation*}
\frac{1}{t} \int_0^t \chi(s)ds = \frac{a_{k}}{t} \frac{1}{a_{k}} \int_0^{a_{k}} \chi(s)ds + \frac{1}{2^{k+1}} \left[ \frac{3}{2t} + 1- \frac{( a_{k}+1)}{t} \right]
\end{equation*}
If we can prove
\begin{equation}\label{star inequality in example}
\frac{1}{a_{k}} \int_0^{a_{k}} \chi(s)ds \leq 2 \chi \left( a_{k} \right) \ \ \ \text{for each} \ k, \tag{$\ast$}
\end{equation}
then the above gives
\begin{equation*}
\begin{split}
\frac{1}{t} \int_0^t \chi(s)ds & \leq \frac{1}{2^{k}} \left( \frac{2a_{k}}{t} + 1- \frac{ a_{k}}{t} \right)\\
& = \frac{1}{2^{k}} \left( 1+ \frac{ a_{k}}{t} \right)\\
& \leq \frac{4}{2^k}\\
& \leq 4 \chi \left( t/4^{\frac{1}{\gamma}} \right).
\end{split}
\end{equation*}
We prove (\ref{star inequality in example}) by induction. It is readily shown for $k=0$. Assuming it holds for $k$, we prove it for $k+1$.

Indeed,
\begin{equation*}
\begin{split}
\frac{1}{a_{k+1}} \int_{0}^{a_{k+1}} \chi(s)ds & = \frac{a_{k}}{a_{k+1}} \frac{1}{a_{k}} \int_{0}^{a_{k}} \chi(s)ds + \frac{1}{a_{k+1}} \int_{a_{k}}^{a_{k+1}} \chi(s)ds \\
& = \frac{a_{k}}{a_{k+1}} 2 \chi(a_{k}) + \frac{1}{a_{k+1}} \int_{a_{k}}^{a_{k+1}}   \frac{1}{2^{k}} \left( 1 - \frac{s - a_{k}}{2} \right)ds \\
& \leq \frac{a_{k}}{a_{k+1}} \frac{1}{2^{k-1}} + \frac{1}{a_{k+1}} \frac{1}{2^{k}} \\
& \leq \frac{1}{k+4} \frac{1}{2^{k-1}} + \frac{1}{(k+4)!} \frac{1}{2^{k}} \\
& \leq \frac{5}{24}\frac{1}{2^{k}}\\
& < \frac{1}{2^{k}} = 2 \chi(a_{k+1}).
\end{split}
\end{equation*}
\end{example}

In view of \cite[Theorem 1]{M} and \cite[Theorem 1]{BK1}, (\ref{Torcinski condition}) and (\ref{Bloom Kerman condition}) are equivalent if $\Psi(2t) \leq C \Psi(t), \ t \in \mathbb{R_+}$, that is $\Psi \in \Delta_2$. Moreover, one can show this is also the case if $\Phi \in \Delta_2$. However, neither $\Psi \in \Delta_2$ nor $\Phi \in \Delta_2$ is necessary for the equivalence of  (\ref{Torcinski condition}) and (\ref{Bloom Kerman condition}), since both conditions hold for \emph{all} Young functions when $w(x) \equiv 1$.

\end{document}